\documentclass{amsart}
\usepackage{amsmath,amssymb,latexsym}
\usepackage{amsthm}
\usepackage{amsfonts}
\usepackage{mathtools,caption}
\usepackage{autobreak}
\usepackage{bm}
\usepackage[top=25mm, bottom=25mm, left=25mm, right=25mm]{geometry}
\usepackage[OT2,OT1]{fontenc}

\usepackage{mleftright}
\usepackage{xparse}
\usepackage{mleftright}
\usepackage{bbm}
\usepackage{enumitem}

\usepackage{array}

\usepackage{xcolor}
\usepackage[colorlinks=true, linkcolor=blue, citecolor=blue]{hyperref}

\usepackage{aliascnt}
\usepackage{cleveref}

\numberwithin{equation}{section}

\mathtoolsset{showonlyrefs=true}

\newcommand{\dsum}{\displaystyle\sum}

\newcommand{\leg}[2]{\left(\frac{#1}{#2}\right)}

\newcommand{\inv}[1]{#1^{-1}}
\newcommand{\dprod}{\displaystyle\prod}

\makeatletter
\def\widebar{\accentset{{\cc@style\underline{\mskip10mu}}}}
\makeatother

\makeatletter
\def\wideubar{\underaccent{{\cc@style\underline{\mskip10mu}}}}
\makeatother

\def\x{^{\times}}
\def\inv{^{-1}}
\def\1{\mathbbm{1}}

\def\jac{\mathrm{Jac}}

\def\F{\mathbb{F}}

\def\Q{\mathbb{Q}}

\def\Z{\mathbb{Z}}

\def\mc{\mathcal}

\DeclareMathOperator{\Res}{Res}
\DeclareMathOperator{\ord}{ord}

\theoremstyle{definition}

\newtheorem{theoremeigo}{Theorem}[section]

\newaliascnt{definitioneigo}{theoremeigo}

\aliascntresetthe{definitioneigo}

\newaliascnt{lemmaeigo}{theoremeigo}
\newtheorem{lemmaeigo}[lemmaeigo]{Lemma}
\aliascntresetthe{lemmaeigo}

\newaliascnt{propositioneigo}{theoremeigo}
\newtheorem{propositioneigo}[propositioneigo]{Proposition}
\aliascntresetthe{propositioneigo}

\newaliascnt{corollaryeigo}{theoremeigo}
\newtheorem{corollaryeigo}[corollaryeigo]{Corollary}
\aliascntresetthe{corollaryeigo}

\newaliascnt{exampleeigo}{theoremeigo}
\newtheorem{exampleeigo}[exampleeigo]{Example}
\aliascntresetthe{exampleeigo}

\newaliascnt{remarkeigo}{theoremeigo}

\aliascntresetthe{remarkeigo}

\newaliascnt{noteeigo}{theoremeigo}

\aliascntresetthe{noteeigo}

\newaliascnt{problemeigo}{theoremeigo}

\aliascntresetthe{problemeigo}

\newaliascnt{conjectureeigo}{theoremeigo}

\aliascntresetthe{conjectureeigo}

\newaliascnt{notationeigo}{theoremeigo}

\aliascntresetthe{notationeigo}

\crefname{theoremeigo}{theorem}{theorems}
\Crefname{theoremeigo}{Theorem}{Theorems}
\crefname{definitioneigo}{definition}{definitions}
\Crefname{definitioneigo}{Definition}{Definitions}
\crefname{lemmaeigo}{lemma}{lemmas}
\Crefname{lemmaeigo}{Lemma}{Lemmas}
\crefname{propositioneigo}{proposition}{propositions}
\Crefname{propositioneigo}{Proposition}{Propositions}
\crefname{corollaryeigo}{corollary}{corollaries}
\Crefname{corollaryeigo}{Corollary}{Corollaries}
\crefname{exampleeigo}{example}{examples}
\Crefname{exampleeigo}{Example}{Examples}
\crefname{remarkeigo}{remark}{remarks}
\Crefname{remarkeigo}{Remark}{Remarks}
\crefname{noteeigo}{note}{notes}
\Crefname{noteeigo}{Note}{Notes}
\crefname{problemeigo}{problem}{problems}
\Crefname{problemeigo}{Problem}{Problems}
\crefname{conjectureeigo}{conjecture}{conjectures}
\Crefname{conjectureeigo}{Conjecture}{Conjectures}
\crefname{notationeigo}{notation}{notations}
\Crefname{notationeigo}{Notation}{Notations}
\crefname{claimeigo}{claim}{claims}
\Crefname{claimeigo}{Claim}{Claims}

\theoremstyle{definition}

\makeatletter

\title{Root numbers for twisted Fermat quotient curves II}
\author{Ryosuke Yanagihara
}

\address{Ryosuke Yanagihara\\
Mathematical inst. Tohoku Univ.\\
6-3, Aoba, Aramaki, Aoba-ku, Sendai, 980-8578,\\ JAPAN}
\email{yanagihara.ryosuke.t1@dc.tohoku.ac.jp}

\keywords{Jacobi sum Hecke character, Relative root number, the Fleck number, Coleman's formula, Rohrlich's formula}

\subjclass{Primary: 11G40, Secondary: 14G10}

\begin{document}

\begin{abstract}
    This is a sequel to the previous work of the author  \cite{yanagihara2025rootnumberstwistedfermat}. 
	Let $\ell$ be an odd prime, let $N \geq 1$ be an integer, and let $\delta \geq 1$ be an integer such that $\delta$ is $\ell^N$-th-power-free.
    Let $r,s,t>0$ be integers satisfying $r+s+t=\ell^N$.
    In \cite{yanagihara2025rootnumberstwistedfermat}, the author computed the root number of the Fermat quotient curve $y^{\ell^N}=x^r(\delta-x)^s$ under the assumptions that $\ell\nmid rst$ and that $\ord_{\ell}(\delta)=0$ or $\ell\nmid \ord_{\ell}(\delta)$.
    In this paper, we study the case where the technical assumption
$\ell\nmid rst$ is dropped.
As one such case, we compute the root number when
$\ell^{N-1}\| r$ and $\ell\nmid st\delta$.
\end{abstract}

\maketitle
\tableofcontents

\section{Introduction}
The study of the root numbers of abelian varieties over number fields 
is one of the main streams in arithmetic geometry. 
Especially, CM abelian varieties are attractive and 
accessible examples to carry out the computation
(see \cite[Section 1]{shu2021root}). 

This paper is a sequel of our previous work \cite{yanagihara2025rootnumberstwistedfermat} and we continue to study the root numbers 
of the CM abelian varieties which are quotients of 
the Jacobian varieties of the twisted Fermat curves. 
To explain the main result, we prepare some notation. 

Let $\ell$ be an odd prime, let $N \geq 1$ be an integer, and let $\delta \geq 1$ be an integer such that $\delta$ is $\ell^N$-th-power-free (i.e., for every prime $p$, the integer $\delta$ is not divisible by $p^{\ell^N}$.
    Let $r,s,t>0$ be integers satisfying $r+s+t=\ell^N$.
    
    In \cite{yanagihara2025rootnumberstwistedfermat}, the author computed the root number of the nonsingular curve $C_N=C_N^{(\delta,r,s)}$ over $\Q$ defined by $y^{\ell^N}=x^r(\delta-x)^s$ under the assumptions that $\ell\nmid rst$ and that either $\ord_{\ell}(\delta)=0$ or $\ell\nmid \ord_{\ell}(\delta)$.
    In this paper, we investigate what happens when we weaken the technical assumption $\ell\nmid rst$.
    
    This is not merely a matter of weakening assumptions; it can also lead to
arithmetically and geometrically interesting situations.
For example, when
$(r,s,t)=(\ell^{N-1}(\ell-1),1,\ell^{N-1}-1)$, the new part
$\jac(C_{\ell^N})^{\mathrm{new}}$
of the Jacobian variety $\jac(C_{\ell^N})$ coincides with
$\jac(C_{\ell^N})$ itself, which is an abelian variety with CM by
$\Q(\zeta_{\ell^N})$.
Furthermore, when $N=2$, one can show that
$\jac(C_{\ell^2})^{\mathrm{new}}=\jac(C_{\ell^2})$ is isomorphic over $\Q$
to the Jacobian of the non-singular projective plane 
curve $$D_{\delta}:\delta^{s+2t}U W^\ell+U^{\ell+1}=\delta V^{\ell}W.$$ 
Thus, the arithmetic study of $\jac(C_{\ell^2})^{\mathrm{new}}$
reduces to that of $D_\delta$. In particular, 
the case where $\ell=3$ have been handled in 
\cite{bouw2020conductor} for the arithmetic study of the Picard curves. 
We explain these results in Appendix~\ref{Appendix1}.

    As one such case, we compute the root number
$W(\phi_{\delta}^{(N)})$ of the Hecke character
$\phi_{\delta}^{(N)}$ associated with the Fermat quotient curve
(see \cite[Section~3]{yanagihara2025rootnumberstwistedfermat})
in the case where $\ell^{N-1}\| r$ and $\ell\nmid st$.

As the main theorem in this paper, we prove the following theorem:
 
\begin{theoremeigo}\label{main}
Let $\epsilon'_N \in \mu_{\ell-1}(\Q_{\ell})$, $b'_N \in \Z$, and $c_N' \in \ell \Z_{\ell}$ be such that
$r^r s^s (\ell^N-t)^t \delta^{r+s} = \epsilon'_N \ell^{b'_N}(1 + c_N')$ and let $w=\min\{\ord_\ell(b_N'),\ord_\ell(c_N')\}$.

Then the root number of $\phi_{\delta}^{(N)}$ is
$$W(\phi_{\delta}^{(N)})=\dprod_{\text{$p\le \infty$}}W_p(\phi_{\delta}^{(N)})$$
where, for $p\neq \ell$,
$$W_p(\phi_{\delta}^{(N)})
=\begin{dcases}
	i^{-\frac{\ell^{N-1}(\ell-1)}{2}}\ &(p=\infty),\\
	\leg{p}{\ell}\ &(p\mid \delta),\\
	1\ &(p\nmid \delta),
\end{dcases} $$
and
\begin{align}
        &W_{\ell}(\phi_{\delta}^{(N)})\\
        =&\begin{dcases}
            -\leg{\pm r'st\frac{c_N'}{\ell^N}J(N,2\ell^{N-w})}{\ell}i^{\frac{\ell^{N-1}(\ell-1)}{2}} &(1 \leq w \leq N,\ \ord_\ell(b_N'+c_N') = w),\\
            -\leg{r'st\frac{c_N'e}{\ell^N}J(N,\ell^{N-w-1}(\ell-1))}{\ell}i^{\frac{\ell^{N-1}(\ell-1)}{2}} &(1 \leq w < N,\ \ord_\ell(b_N'+c_N') > w),\\
            \leg{2}{\ell}i^{\frac{\ell^{N-1}(\ell-1)}{2}} &(\text{otherwise}),
        \end{dcases}
    \end{align}
    
    where $\pm$ is $-1$ if and only if $w=N$ and $e=\begin{dcases}
        2 &(w=N-1)\\
        1 &(\text{otherwise})
    \end{dcases}$ and  $J(n,f)$ is defined in \cite[Definition 1.1]{yanagihara2025rootnumberstwistedfermat}.
\end{theoremeigo}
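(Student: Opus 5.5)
We would follow the strategy of \cite{yanagihara2025rootnumberstwistedfermat}. First, write the global root number of the algebraic Hecke character $\phi_\delta^{(N)}$ of $K=\Q(\zeta_{\ell^N})$ as a product of local root numbers, and group the places of $K$ above each rational prime $p\le\infty$ into a factor $W_p(\phi_{\delta}^{(N)})$.

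\textbf{The places $p\neq\ell$.} These factors do not depend on the divisibility of $r,s,t$ by $\ell$. At $p\nmid\ell\delta$ the character is unramified, so $W_p=1$. At $\infty$ there are $\ell^{N-1}(\ell-1)/2$ complex places, each contributing $i^{-1}$ because of the infinity type. At $p\mid\delta$ the character is tamely ramified, and its restriction to inertia is a power-residue symbol twisted by $\delta$. The Gauss-sum computation of the previous paper then gives the Legendre symbol $\leg{p}{\ell}$. We would check that the arguments there used only $\ell\nmid\delta$ and the local behaviour at $p$, not $\ell\nmid rst$, and cite them.

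\textbf{The place $\ell$.} This is the real work. Since $\ell$ is totally ramified in $K$, there is a single place $\lambda=(1-\zeta_{\ell^N})$. We need the restriction of $\phi_\delta^{(N)}$ to $\mathcal O_\lambda^\times$. For this we use Coleman's formula (equivalently, the formula of Rohrlich) for the Jacobi-sum character. It expresses the local character at $\lambda$ through the Fleck-type product $r^rs^s(\ell^N-t)^t\delta^{r+s}$, via the norm residue symbol $\bigl(\,\cdot\,,\,r^rs^s(\ell^N-t)^t\delta^{r+s}\bigr)_{\ell^N}$ times a contribution from the index $(r,s,t)$.

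\textbf{The new input: $r=\ell^{N-1}r'$.} The index contribution now degenerates. We would redo the Coleman computation with the Stickelberger-type element modified by the factor $\ell^{N-1}$. Using $r+s+t=\ell^N$ and $\ell\nmid st$, we expect the effect to be that the character becomes, up to an explicitly computed unramified piece, the Hilbert symbol with $\epsilon'_N\ell^{b'_N}(1+c'_N)$ multiplied by a scalar involving $r'st$. The conductor exponent is governed by $w=\min\{\ord_\ell b'_N,\ord_\ell c'_N\}$: the symbol against $\ell^{b'_N}$ and the symbol against $1+c'_N$ have conductors determined by the valuations of $b'_N$ and $c'_N$. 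This is why the answer splits according to whether $\ord_\ell(b'_N+c'_N)=w$ (no cancellation between the two pieces) or $>w$ (cancellation, which drops the conductor by one step and produces the factor $e$ and the exponent $\ell^{N-w-1}(\ell-1)$).

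\textbf{Evaluating the local Gauss sum.} With the conductor $f=2\ell^{N-w}$ or $\ell^{N-w-1}(\ell-1)$ in hand, we evaluate the Gauss sum by the standard stationary-phase method for characters of $(\mathcal O_\lambda/\lambda^f)^\times$. Write the character on $1+\lambda^{\lceil f/2\rceil}$ as $\psi(\beta x)$ for an explicit $\beta$ proportional to $r'st\,c'_N/\ell^N$. The root number then reduces to a quadratic Gauss sum over the residue field $\F_\ell$, giving $\leg{\beta\cdot J(N,f)}{\ell}$. Here $J(N,f)$ is the auxiliary unit of \cite[Definition 1.1]{yanagihara2025rootnumberstwistedfermat} that arises from $\mathrm{Tr}$ and the different. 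The quadratic Gauss sum also supplies the sign $-1$ and, together with the normalization, the factor $i^{\ell^{N-1}(\ell-1)/2}$. The $\pm$ for $w=N$ comes from the extra sign in the boundary case where $b'_N$ contributes. In the remaining case (tame or conductor $\le1$) the Gauss sum is a Gauss sum of a Legendre-type character, giving $\leg{2}{\ell}$.

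The main obstacle is the Coleman-formula step when $\ell^{N-1}\|r$. We must determine the exact conductor and the linear term $\beta$, including the precise constant $r'st$ and the correction $e=2$ when $w=N-1$. The cancellation case $\ord_\ell(b'_N+c'_N)>w$ requires expanding $\log$ of $\epsilon'_N\ell^{b'_N}(1+c'_N)$ to second order.
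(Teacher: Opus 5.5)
Your treatment of the places $p\neq\ell$ matches the paper: it cites the earlier computation. At $\ell$, however, there is a genuine gap. You never say how to handle the $\ell$-power part of $a=r^rs^s(\ell^N-t)^t\delta^{r+s}$, and that is the only new phenomenon in this setting.

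The Coleman--McCallum formula needs no redoing. On units, $\phi_{\delta,\pi}(x)=(x,a)_{\ell^N}(x,\pi_{\mathrm{CM}})_2$ exactly as before, with no extra ``scalar involving $r'st$''. What changes is that $b'_N=\ell^{N-1}(N-1)r'\neq 0$. Hence $\phi_{\delta,\pi}(1+\pi^{f-1})$ acquires the factor $(1+\pi^{f-1},\ell)_{\ell}^{(N-1)r'}$, and this is precisely where Coleman's formula cannot be applied directly.

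Since $(\cdot,\ell)_\ell$ has conductor exponent $\ell+1$, this factor is automatically trivial when $f-1\ge \ell+1$. When $w=N$, it is trivial because then $\ell\mid N-1$. The remaining case is $N=w+1$ with $\ord_\ell(b'_N+c'_N)>w$, so $f=\ell-1$, which is exactly the $e=2$ case. There one must prove $(1+\pi^{\ell-2},\ell)_\ell=1$; otherwise the exponent carries an undetermined extra term $(N-1)r'[1+\pi^{\ell-2},\ell]_\ell$. The paper proves it in four steps:
\begin{enumerate}
\item write $\ell=\prod_{\ell\nmid i}(1-\zeta^i)$;
\item use Vostokov's explicit formula to get $[1+\pi^{\ell-2},1-\zeta^i]_\ell\equiv -2c_\ell(i)$;
\item kill the low-order coefficients with the finite-difference lemma $\sum_r(-1)^r\binom{n}{r}g(r)=0$ for $n>\deg g$;
\item use Lucas' theorem to see that $c_\ell(i)$ depends only on $i \bmod \ell$, so the sum over $i$ is $\ell^{N-1}\sum_a c_\ell(a)\equiv 0$.
\end{enumerate}
Nothing in your proposal plays this role.

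Several of your attributions are also wrong, so the computation as you set it up would not land on the stated formula. The $\pm$ and $e$ do not come from cancellation, nor from $b'_N$, which contributes nothing when $w=N$. They are the factor $(1-f)\bmod \ell$ in the earlier formula
\[
[1+\pi^{f-1},1+c]=(1-f)\tfrac{2c}{\ell}J(N,f)\sum_k(-1)^{k+1}c^{k-1}/k,
\]
with $f=2$ giving $-1$ and $f=\ell-1$ giving $2$.

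Your Gauss-sum step also fails as described. Both conductors $2\ell^{N-w}$ and $\ell^{N-w-1}(\ell-1)$ are even. Stationary phase then gives $\chi^{-1}(\beta)\psi(\beta)$ with no residual quadratic Gauss sum, and it requires the character's value at the non-unit $\beta$, i.e. at a uniformizer, which you do not address. The paper avoids this with Rohrlich's relative formula: $W_\ell=W(\phi_{\delta,\pi},\eta_{\pi_F})\,W(\eta_{\pi_F})$ with $W(\eta_{\pi_F})=i^{\ell^{N-1}(\ell-1)/2}$. For $f'>1$ it gives $W(\phi_{\delta,\pi},\eta_{\pi_F})=-\leg{-2r'st\,l}{\ell}\leg{-1}{\ell}$, where $\phi_{\delta,\pi}(1+\pi^{f'-1})=\zeta_\ell^{l}$. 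That is where $r'st$ and the Legendre symbol actually come from.
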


The strategy of calculation here is almost same as \cite{yanagihara2025rootnumberstwistedfermat}; that is, the calculation of local root numbers at ramified primes is based on Rohrlich’s formula \cite[Proposition 2]{rohrlich1982} and the calculation of certain special values of the $\ell^N$-th Hilbert symbol is based on Coleman’s formula \cite[p.89, l.2]{coleman1988stable}. 

However, the assumption $\ell \mid r$ complicates the calculation of the Hilbert symbol, making it difficult to apply Coleman’s formula \cite[p.~89, l.~2]{coleman1988stable} directly, as in our previous work. We therefore complete the calculation using Vostokov's formula \cite[p.~239, (1.4)]{fesenko2002local}. A key ingredient in the argument is a certain combinatorial lemma \Cref{difference}.

We organize this paper as follows. In Section 2, we review the computations in my previous work and compute a wildly ramified Hilbert symbol. In Section 3, we combine these calculations to compute the root number. In Section 4, we present some concrete examples and explain 
a structure of the new parts of the Jacobian varieties of the twisted Fermat quotients under our setting.


\textbf{Acknowledgments}

The author expresses deep gratitude to his supervisor, Takuya Yamauchi for his careful and patient guidance. 
The author is supported by JST SPRING, Grant Number JPMJSP2114.

\textbf{Notation.}
\cite[Section 1, Notation]{yanagihara2025rootnumberstwistedfermat} is used throughout (however, we consider only the case $\ell\nmid\delta$).
For a power series $f(X)\in\Z_{\ell}[[X]]$, we write $[X^n]f(X)$ for the coefficient of $X^n$.

\section{Calculation of the Hilbert symbols}

We first review the computations in the preceding work \cite{yanagihara2025rootnumberstwistedfermat}, and clarify the differences of the situations from it.

\subsection{\ Outline of calculations}
Let $\ell$ be an odd prime, and let $r,s,t,N$ be positive integers such that $r+s+t=\ell^N$, with $r=\ell^{N-1}r'$ $(\ell\nmid r')$ and $\ell\nmid st$. Let $\delta \geq 1$ be an integer such that $\delta$ is $\ell^N$-th-power-free and $\ell\nmid\delta$.

$K=\Q(\zeta)$ (with $\zeta=\zeta_{\ell^N}$), and let $V$ be a place of $K$ above $\ell$.
We consider the Hilbert symbol $(\cdot,\cdot)_{\ell^N}$ over $K_V=\Q_{\ell}(\zeta)$.

Let $\phi_{\delta}$ be the Hecke character defined in \cite[Section 3]{yanagihara2025rootnumberstwistedfermat}.
Its root number is
$$
W(\phi_{\delta}) = \prod_{p\leq \infty} W_p(\phi_{\delta})
$$
where for each prime $p$, $W_p(\phi_{\delta}) := \prod_{V \mid p} W(\phi_{\delta, V})$ is the product of the local root numbers.
In \cite{yanagihara2025rootnumberstwistedfermat}, the following result was obtained without using assumptions on $r$ and $\delta$:

\begin{propositioneigo}(\cite[Proposition 4.4]{yanagihara2025rootnumberstwistedfermat})
For a prime $p\neq \ell$, we have
    \[
    W_p(\phi_{\delta}) =
    \begin{dcases}
        i^{-\frac{\ell^{N-1}(\ell-1)}{2}} & (p = \infty) \\
        1 & (p \nmid \delta) \\
        \leg{p}{\ell} & (p \mid \delta).
    \end{dcases}
    \]
\end{propositioneigo}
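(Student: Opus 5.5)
This proposition is imported from the earlier paper, and its proof there never uses $\ell\nmid rst$. So the plan is to check that the argument of \cite[Proposition 4.4]{yanagihara2025rootnumberstwistedfermat} goes through verbatim in the present setting $\ell^{N-1}\| r$, $\ell\nmid st\delta$. The local characters $\phi_{\delta,V}$ for $V\nmid\ell$ come from the Jacobi sum Hecke character twisted by the $\ell^N$-th power residue symbol attached to $\delta^{r+s}$, as in \cite[Section 3]{yanagihara2025rootnumberstwistedfermat}.

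\textbf{Archimedean place.} Here $K=\Q(\zeta)$ is totally imaginary of degree $\ell^{N-1}(\ell-1)$. Each of the $\ell^{N-1}(\ell-1)/2$ complex places contributes a factor $i^{-1}$, because the infinity type of the Jacobi sum character has weight $1$ at each place. This gives $W_\infty=i^{-\ell^{N-1}(\ell-1)/2}$, and the infinity type does not depend on $r,s,t$.

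\textbf{Places $V\mid p$ with $p\nmid\ell\delta$.} The Jacobi sum character has conductor supported above $\ell$. The twist by $\delta$ is unramified at $V$, since $\delta^{r+s}$ is a unit at $V$ and the residue characteristic of $V$ is prime to $\ell^N$. So $\phi_{\delta,V}$ is unramified, and we get $W(\phi_{\delta,V})=1$.

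\textbf{Places $V\mid p$ with $p\mid\delta$.} The character $\phi_{\delta,V}$ is a tamely ramified character times an unramified one. Its ramified part is the $\ell^N$-th power residue character $\chi=\bigl(\tfrac{\cdot}{V}\bigr)^{\pm\ord_p(\delta^{r+s})}$ restricted to units. The local root number is then a normalized Gauss sum over the residue field $\F_q$, with $q=p^{f}$ and $f$ the order of $p$ modulo $\ell^N$. The key step is to pair $V$ with its conjugates. The decomposition group is generated by $p$ acting as $\sigma_p$, and the $g=\ell^{N-1}(\ell-1)/f$ places above $p$ are permuted by $\mathrm{Gal}(K/\Q)/\langle p\rangle$. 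Two cases arise.
\begin{itemize}
\item If $-1\in\langle p\rangle$, i.e.\ $V=\bar V$: the Gauss sum is pure by Stickelberger. Explicitly, its value is $\pm q^{1/2}$ (Stickelberger/Gross--Koblitz for semiprimitive primes), and its sign must be computed.
\item If $-1\notin\langle p\rangle$: the places pair up as $V,\bar V$, and $W(\phi_V)W(\phi_{\bar V})=\chi_V(-1)$.
\end{itemize}
Multiplying over all $V\mid p$ and using $\chi(-1)$ together with the sign of the semiprimitive Gauss sum, we collapse the product to a Legendre symbol. The natural route is a transfer argument: the product over $V\mid p$ equals $\det(\mathrm{Ind}_K^\Q)$ evaluated at $p$, which yields the quadratic character of $\Q(\sqrt{\pm\ell})$ at $p$, namely $\leg{p}{\ell}$. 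This is independent of the exponent $\ord_p(\delta^{r+s})$, provided that exponent is not divisible by $\ell^N$. That condition holds because $\delta$ is $\ell^N$-th-power-free and $\ell\nmid r+s$ (since $\ell\nmid t$).

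\textbf{Main obstacle.} The hard step is this last Gauss-sum sign computation. The main thing to verify is that $\ell\mid r$ does not affect it. This holds because only $r+s\equiv -t \not\equiv 0 \pmod{\ell}$ enters, so the order of $\chi$ is still exactly $\ell^N/\gcd$ and the argument of \cite{yanagihara2025rootnumberstwistedfermat} applies unchanged.
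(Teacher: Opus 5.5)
\textbf{There is a genuine gap, at the self-conjugate places above $p\mid\delta$.}

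Your treatment of $p=\infty$ and of $p\nmid\ell\delta$ is fine. Strictly, the CM type does depend on $(r,s,t)$; what you need is that it is still a weight-one CM type, which holds because $ar,as,at\not\equiv 0\pmod{\ell^N}$ for every unit $a$. The pairs $V\neq\bar V$ above $p\mid\delta$ are also fine: each contributes $\phi_{\delta,V}(-1)$, which is both $\pm1$ and of odd order, hence $1$.

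The gap is at the places $V=\bar V$, i.e.\ when $f=\mathrm{ord}_{\ell^N}(p)$ is even. This is the only possible source of $\leg{p}{\ell}$.
\begin{itemize}
\item First, the root number of a character of conductor exponent $1$ is not just a normalized Gauss sum. With an unramified additive character, and up to the usual normalizations, $W(\phi_{\delta,V})=\phi_{\delta,V}(p)\,\tau/\sqrt{q}$, where $\tau$ is the Gauss sum of $\phi_{\delta,V}^{-1}$ on $\F_q^\times$. The factor $\phi_{\delta,V}(p)$ contains the unitarized Jacobi-sum value of the unramified part.
\item This is exactly where $r$ enters: for $\ell\mid r$ the character $\omega_V^{r}$ has order $\ell$ rather than $\ell^N$. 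So your ``main obstacle'' paragraph, which says only $r+s$ matters, locates the issue wrongly.
\item Dropping this factor is not harmless. By Stickelberger's semiprimitive evaluation, $\tau=+\sqrt q$ here, so your recipe would give $W_p=1$. That is false whenever $\leg{p}{\ell}=-1$.
\item Second, you never determine the sign, and the ``transfer'' step cannot replace that computation. No identity equates $\prod_{V\mid p}W(\phi_{\delta,V})$ with a determinant at $p$. The sign character $\det\mathrm{Ind}_K^{\Q}1$ at $\mathrm{Frob}_p$ equals $\leg{p}{\ell}$ also for $p\nmid\delta$, where $W_p=1$. And $\det\mathrm{Ind}_K^{\Q}\phi_\delta$ at $p$ carries the extra factor $\prod_{V\mid p}\phi_{\delta,V}(p)$, which in the semiprimitive case is $(-1)^{g_p}$ and cancels the sign character.
\end{itemize}

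\textbf{The missing idea} is that the unitarized $\phi_\delta$ restricts to $\eta$ on the ideles of $F=K^+$. This is the setting of the relative root numbers $W(\cdot,\eta)$ used in the paper; the Kummer twist restricts trivially, since it has odd order and is inverted by complex conjugation.
\begin{itemize}
\item At a self-conjugate $V\mid p$, the extension $K_V/F_{V^+}$ is unramified quadratic and $p$ is a uniformizer of $F_{V^+}$. Hence $\phi_{\delta,V}(p)=\eta_V(p)=-1$, whatever $r$ and $s$ are.
\item Combine this with $\tau=+\sqrt q$. For a character of order $\ell^a$, if $j$ is minimal with $p^j\equiv -1\pmod{\ell^a}$, then $f/2j$ is a power of $\ell$, hence odd, and Stickelberger gives the sign $+$.
\item Equivalently, twist by the unramified $\mu$ with $\mu(p)=-1$ and apply Fr\"ohlich--Queyrut with a unit $\Delta$ satisfying $c(\Delta)=-\Delta$. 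Its reduction $d\in\F_q$ satisfies $d^{q^{1/2}-1}=-1$, so $d^{(q-1)/\ell^N}=(-1)^{(q^{1/2}+1)/\ell^N}=1$.
\item Either way, $W(\phi_{\delta,V})=-1$ at each of the $g_p=\ell^{N-1}(\ell-1)/f$ places above $p$, so $W_p=(-1)^{g_p}$.
\item Now $g_p$ is odd if and only if $v_2(f)=v_2(\ell-1)$, if and only if $p$ is a non-square modulo $\ell^N$, that is, $\leg{p}{\ell}=-1$. For $f$ odd, $W_p=1=\leg{p}{\ell}$.
\end{itemize}
This is the computation your proposal must supply, and it shows why $\ell\mid r$ is irrelevant. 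The paper itself only cites Proposition~4.4 of the earlier work, remarking that its proof uses no hypothesis on $r$ or $\delta$.
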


\begin{propositioneigo}(\cite[Section 4.3.1]{yanagihara2025rootnumberstwistedfermat})
Let $F$ be the maximal totally real subfield of $K$, and let $\eta$ be the quadratic Hecke character corresponding to the quadratic extension $K/F$ via class field theory.
For the prime element $\pi_F=\pi^2$ (where $\pi=\pi_K=\zeta-\zeta^{-1}$), we have
    \[
W(\eta_{\pi_F}) = i^{\frac{\ell^{N-1}(\ell-1)}{2}}.
\]
\end{propositioneigo}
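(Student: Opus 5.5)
The plan is to avoid computing any Gauss sum at $\ell$ directly, and instead to read off the $\ell$-adic factor from the global functional equation of $\eta$. By the product formula for root numbers it is then enough to know the root number at every other place of $F$.

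\textbf{Step 1: the global root number is $1$.} We have $L(s,\eta)=\zeta_K(s)/\zeta_F(s)$. Dedekind zeta functions satisfy functional equations with root number $1$, so $W(\eta)=1$. Equivalently, $\eta$ is a real-valued quadratic Artin character, whose root number is known to be $1$.

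\textbf{Step 2: write $W(\eta)$ as a product of local factors.} Use the global additive character $\psi_F=\psi_{\Q}\circ\mathrm{Tr}_{F/\Q}$. Then
\[
1=\prod_{v}W(\eta_v,\psi_{F,v}).
\]
\begin{itemize}
\item \emph{Finite places $v\nmid \ell$.} Here $K/F$ is unramified, because $\ell$ is the only prime ramified in $K$. The different of $F/\Q$ is also supported only above $\ell$, so $\psi_{F,v}$ is unramified. Hence $W(\eta_v,\psi_{F,v})=1$.
\item \emph{Archimedean places.} $F$ has $[F:\Q]=\ell^{N-1}(\ell-1)/2$ real places. At each of them $\eta_v$ is the sign character, since $K$ is totally imaginary. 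Its local root number is $i^{-1}$ in the normalization used for $W_\infty$ in the preceding proposition, which is the one giving $i^{-\ell^{N-1}(\ell-1)/2}$ at $p=\infty$.
\end{itemize}
Therefore the unique place of $F$ above $\ell$ (totally ramified) must carry the factor $i^{\ell^{N-1}(\ell-1)/2}$.

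\textbf{Step 3: match this with the normalization $\eta_{\pi_F}$.} The symbol $W(\eta_{\pi_F})$ is the local root number at $\ell$ computed with the additive character and uniformizer attached to $\pi_F=\pi^2$, as in \cite{yanagihara2025rootnumberstwistedfermat}. This is the step I expect to be the main obstacle. The tool is the standard change-of-character rule
\[
W(\chi,\psi(a\,\cdot))=\chi(a)\,W(\chi,\psi).
\]
I would write the conductor of $\psi_{F,\ell}$ (essentially the different of $F_\ell/\Q_\ell$) as $\pi_F^{d}u$ with $u$ a unit, and then check that the resulting factor $\eta(\pi_F^{d}u)$ equals $1$. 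Two facts should give this:
\begin{itemize}
\item $\eta(u)=1$ for the relevant units, since they are norms from $K_V$. This can be checked via $\pi_F=-N_{K/F}(\pi)$ together with suitable cyclotomic units.
\item $\eta(-1)=1$ locally at $\ell$. Indeed, $\eta(-1)$ is the product of the archimedean signs, which is $(-1)^{[F:\Q]}$. Since the finite places away from $\ell$ contribute trivially, the product formula for $\eta(-1)$ then forces the local value at $\ell$.
\end{itemize}
Once this normalization is checked, the claimed identity $W(\eta_{\pi_F})=i^{\ell^{N-1}(\ell-1)/2}$ follows immediately from Step 2.
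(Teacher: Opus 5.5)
The paper gives no proof of this proposition; it quotes it from Section~4.3.1 of the earlier work, so there is no argument here to compare against. On its own terms, your Steps~1 and~2 already form a complete proof. With $\psi_F=\psi_{\Q}\circ\mathrm{Tr}_{F/\Q}$, you have $W(\eta)=1$, every factor $W(\eta_v,\psi_{F,v})$ with $v\nmid \ell\infty$ equals $1$, and so $W(\eta_{\pi_F})$ is the inverse of the archimedean contribution. Step~3 should be deleted.

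\textbf{Why Step 3 is unnecessary, and where it goes wrong.} $W(\eta_{\pi_F})$ is the local root number of $\eta$ at the place $(\pi_F)$ taken with respect to $\psi_{F,\ell}$ itself; the subscript only names the place. It has to mean this for three reasons:
\begin{itemize}
\item The main theorem writes $W_\ell(\phi^{(N)}_\delta)=W(\phi_{\delta,\pi},\eta_{\pi_F})\,W(\eta_{\pi_F})$.
\item The relative factor $W(\phi_{\delta,\pi},\eta_{\pi_F})$ does not depend on the additive character.
\item $\prod_p W_p$ equals the global root number only if $W(\eta_{\pi_F})$ uses the same global character as the factors at $p\neq\ell$ and at $\infty$.
\end{itemize}
A renormalization ``attached to $\pi_F$'' is not harmless. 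Replacing $\psi$ by $\psi(a\,\cdot)$ multiplies $W(\eta_v)$ by $\eta_v(a)$, and on units $\eta_v(u)=\leg{\bar u}{\ell}$ is nontrivial. In addition, the second bullet of Step~3 is false. Your own product-formula argument gives $\eta_v(-1)=(-1)^{[F:\Q]}=(-1)^{(\ell-1)/2}=\leg{-1}{\ell}$, which is $-1$ when $\ell\equiv 3\pmod 4$. This is consistent with $\pi_F=-N_{K/F}(\pi)$, which gives $\eta_v(\pi_F)=\leg{-1}{\ell}$. The first bullet, that the relevant units are norms, is asserted without proof.

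\textbf{What Step 2 still needs.} You assert $W(\mathrm{sgn})=i^{-1}$ ``in the normalization used for $W_\infty$''. But that proposition concerns $\phi_\delta$ at complex places of $K$, not $\eta$ at real places of $F$, so you need to state the link. For a fixed $\psi_{\mathbb R}$, the root number of $\mathrm{sgn}$ equals the root number of a unitary weight-one character of $\mathbb C^\times$ with respect to $\psi_{\mathbb R}\circ\mathrm{Tr}_{\mathbb C/\mathbb R}$. Both are $i$ for $\psi_{\mathbb R}(x)=e^{2\pi i x}$, and both are $-i$ for $e^{-2\pi i x}$. Equivalently, the archimedean relative root numbers are $1$. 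It follows that $\prod_{v\mid\infty}W(\eta_v)=W_\infty(\phi^{(N)}_\delta)=i^{-\ell^{N-1}(\ell-1)/2}$, and then $W(\eta)=1$ gives the claim.

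A direct local evaluation would need $\eta_v$ on an explicit element of valuation $1+\ord_{\pi_F}\mathfrak d_{F_v/\Q_\ell}$, together with a quadratic Gauss sum over $\F_\ell$. Your global route avoids both, and it explains why the answer is exactly the inverse of $W_\infty$.
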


Therefore, in this paper, we consider that we also use Rohrlich's formula \cite[Proposition 2]{rohrlich1992root} to compute the relative root number $W(\phi_{\delta,\pi}, \eta_{\pi_F})$.
In this purpose, according to \cite[Proposition 4.8]{yanagihara2025rootnumberstwistedfermat}, we need to compute the following Hilbert symbols:

When we express $a$ as $a = \epsilon \ell^b(1 + c)$ (where $\epsilon \in \mu_{\ell-1}(\Q_\ell)$, $b \in \Z$, and $c \in \ell \Z_\ell$), then we have $b=\ell^{N-1}(N-1)r'$. Let us recall the Sharifi's formula.

\begin{theoremeigo}[Sharifi's formula for $\Q_{\ell}$ {\normalfont\cite[Theorem 8]{sharifi2001norm}}]
    Let $a\in\Q_{\ell}$ and express $a$ as $a = \epsilon \ell^b(1 + c)$ (where $\epsilon \in \mu_{\ell-1}(\Q_\ell)$, $b \in \Z$, and $c \in \ell \Z_\ell$).
    Let $w=w(a):=\min\{\ord_\ell(b),\ord_\ell(c)\}$. Then the conductor exponent of $(\cdot,a)_{\ell^N}$ is 
    \[
    f(a)=
    \begin{dcases}
        \ell^{N-1}(\ell+1) & (w = 0), \\
        2\ell^{N-w} & (1 \leq w < N,\ \ord_\ell(b + c) = w), \\
        \ell^{N-w-1}(\ell-1) & (1 \leq w < N,\ \ord_\ell(b + c) > w), \\
        2 & (w = N = \ord_\ell(c)), \\
        0 & (\text{$w > N$ or $w = N \neq \ord_\ell(c)$}).
    \end{dcases}
    \]
\end{theoremeigo}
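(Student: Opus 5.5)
The plan is to reduce the conductor computation for the character $x\mapsto (x,a)_{\ell^N}$ on $K_V^\times$ to a ramification computation for the Kummer extension $K_V(a^{1/\ell^N})/K_V$. By local class field theory, $f(a)$ is the smallest $f$ with $U_{K_V}^{(f)}$ in the kernel. Equivalently, it is determined by the largest upper ramification break of $L_a:=K_V(a^{1/\ell^N})$ over $K_V$, shifted by one (and $f(a)=0$ when $L_a/K_V$ is unramified or trivial).

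First I normalize $a$. Since $\gcd(\ell-1,\ell)=1$, the root of unity $\epsilon$ is an $\ell^N$-th power in $\Q_\ell$, so it does not affect the symbol. Next, $1+\ell\Z_\ell=(1+\ell)^{\Z_\ell}$, and $1+c=(1+\ell)^{u}$ with $\ord_\ell(u)=\ord_\ell(c)$. Hence, modulo $(K_V^\times)^{\ell^N}$, we have $a\equiv \ell^{b}(1+\ell)^{u}$. Only $b$ and $u$ modulo $\ell^N$ matter, and $w=\min\{\ord_\ell b,\ord_\ell c\}$ measures how far $a$ is from being an $\ell^N$-th power. In particular $[L_a:K_V]=\ell^{N-w}$ when $w\le N$. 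For $w>N$ the symbol is trivial, giving $f=0$.

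Next, write $\ell^{w}$-th roots explicitly and put $a_0=\ell^{b/\ell^w}(1+\ell)^{u/\ell^w}$, so that $a_0$ is not an $\ell$-th power. Then $L_a=K_V(\zeta_{\ell^{N}}, a_0^{1/\ell^{N-w}})$, and I would compute its ramification over $\Q_\ell$ inside an explicit tower. The element $(1+\ell)^{1/\ell^k}$ generates a subextension of $\Q_\ell(\zeta_{\ell^{N+k}})$ up to unramified twist, because $1+\ell$ is, up to $\ell^k$-th powers, the kind of unit whose roots lie in the cyclotomic tower. Its upper breaks are therefore the integers $1,\dots,N+k-1$ in Serre's normalization. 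The extension $\Q_\ell(\zeta_{\ell^N},\ell^{1/\ell^k})$ is the genuinely non-abelian Kummer piece, whose breaks are known (they are larger by about $\ell/(\ell-1)$ in the lower numbering over $K_V$). Passing from the upper numbering over $\Q_\ell$ to the numbering over $K_V$ via the Herbrand function of $K_V/\Q_\ell$ (which is $\psi(v)=\ell^{N-1}(\ell-1)\cdot(\text{piecewise})$), the expected outputs are as follows. The pure $\ell$-part with $w=0$ gives $\ell^{N-1}(\ell+1)$. The pure unit part gives $\ell^{N-w-1}(\ell-1)$-type values. Mixed generators give $2\ell^{N-w}$.

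The delicate distinction is between $\ord_\ell(b+c)=w$ and $\ord_\ell(b+c)>w$. I expect this to be the main obstacle. The point should be that $-\ell\cdot$(unit $\equiv 1+\ell$ to first order) is a norm from $\Q_\ell(\zeta_{\ell^\infty})$: indeed $\ell=\prod(1-\zeta_\ell^i)$ is a norm. Consequently the combination $\ell^{b}(1+\ell)^{u}$ with $b+u\equiv 0$ to higher order has its wild "Kummer" part cancel. The symbol then behaves like that of a cyclotomic unit, which has the smaller conductor $\ell^{N-w-1}(\ell-1)$. Otherwise the generic break $2\ell^{N-w}$ survives. I would verify this by computing $(\cdot,a_0)$ on the explicit generators $1-\zeta^{i}\pi^{j}$ of the higher unit groups, using Artin–Hasse/Coleman-type explicit reciprocity for $(x,1+\ell)$ and $(x,\ell)$ separately, and checking at which level the two contributions cancel. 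The boundary case $w=N$ is handled last. There $L_a$ is trivial, yet the symbol $(\cdot,1+c)_{\ell^N}$ with $\ord(c)=N$ is a tamely ramified character of conductor $2$ exactly when the unit part, rather than $b$, attains the minimum; this gives the remaining two lines of the formula.
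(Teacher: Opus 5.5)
The paper gives no proof of this statement; it quotes it from Sharifi. Your framework (conductor $=$ one plus the top upper break of $L_a=K_V(a^{1/\ell^N})/K_V$) is legitimate, but your sketch rests on two false inputs.

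First, the normalization. Since $\ord_\ell\log(1+\ell)=1$, writing $1+c=(1+\ell)^u$ gives $\ord_\ell(u)=\ord_\ell(c)-1$, not $\ord_\ell(c)$. So $a$ is exactly an $\ell^m$-th power with $m=\min\{\ord_\ell(b),\ord_\ell(c)-1\}$, and $[L_a:K_V]=\ell^{N-m}$ for $m\le N$, not $\ell^{N-w}$. This breaks your boundary case. When $\ord_\ell(c)=N\le\ord_\ell(b)$, one has $L_a=K_V(a_0^{1/\ell})$ with $a_0\equiv(1+\ell)^{u'}$ modulo $\ell$-th powers and $\ell\nmid u'$. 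This is a nontrivial, wildly ramified extension of degree $\ell$, and it is the one the statement assigns conductor $2$. A trivial $L_a$ would give $f=0$, and a tame character cannot have conductor exponent $2$.

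Second, $(1+\ell)^{1/\ell^k}$ does not lie in a cyclotomic field up to unramified twist. The field $\Q_\ell((1+\ell)^{1/\ell})$ has degree $\ell$ and is not normal over $\Q_\ell$: its normal closure contains $\zeta_\ell$, and $\ell-1\nmid\ell$. So it lies in no abelian extension at all. Hence $K_V((1+\ell)^{1/\ell})$ is just as non-abelian over $\Q_\ell$ as $K_V(\ell^{1/\ell})$. There are no cyclotomic breaks to transport with Herbrand's function, and your ``expected outputs'' are unsupported.

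The decisive gap is the case $\ord_\ell(b+c)>w$, where you offer only a heuristic. In fact no argument can produce the printed value there.
\begin{itemize}
\item The condition forces $\ord_\ell(b)=\ord_\ell(c)=w$. By the corrected normalization, $a=a_0^{\ell^{w-1}}$ with $a_0\equiv(1+\ell)^{u'}$ modulo $(\Q_\ell^\times)^\ell$ and $\ell\nmid u'$.
\item Hence $\chi:=(\cdot,a)_{\ell^N}=(\cdot,a_0)_{\ell^{N-w+1}}$, and $\chi^{\ell^{N-w}}=(\cdot,(1+\ell)^{u'})_\ell$.
\item This last character is ramified. The field $K_V((1+\ell)^{u'/\ell})$ is non-abelian over $\Q_\ell$, so it is neither $K_V$ nor the unramified degree-$\ell$ extension. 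Since it is also trivial on $\mu_{\ell-1}$, $\chi|_{U^{(1)}}$ has order $\ell^{N-w+1}$.
\item On the other hand, $v_\pi(\ell)=\ell^{N-1}(\ell-1)$ gives $(U^{(\ell^j)})^{\ell}\subseteq U^{(\ell^{j+1})}$ for $j+1\le N-1$. Hence $(U^{(1)})^{\ell^{N-w}}\subseteq U^{(\ell^{N-w})}$.
\item Therefore $f\ge \ell^{N-w}+1>\ell^{N-w-1}(\ell-1)$.
\end{itemize}
A concrete instance is the paper's own example $(r,s,t)=(3,5,1)$, $\delta=1$: $\ell=3$, $N=2$, $a=3^3\cdot 25000$, with $\ord_3(b)=\ord_3(c)=1$ and $\ord_3(b+c)=3$. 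The formula predicts $f=2$, but $U^{(1)}/U^{(2)}\cong\F_3$ cannot carry a character of order $9$.

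So the third line, as printed, cannot be proved and should be checked against Sharifi's original. A value such as $\ell^{N-w-1}(\ell+1)$ would at least be compatible with this bound. For the remaining lines, a proof still needs what you only announce: the actual evaluation of $(1+x\pi^{k},a)_{\ell^N}$ for $k$ near the claimed conductor, via an explicit reciprocity law.
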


Let $a=r^rs^s(\ell^N-t)^t\delta^{r+s}$. Then we have $b:=\ord_{\ell}(a)=\ell^{N-1}(N-1)r'$.
Let $f=f(a)$ and we want to compute $(1+\pi^{f-1},a)_{\ell^N}$ when $f>0$.

    Moreover,
\begin{align}
    (1+\pi^{f-1},a)_{\ell^N}
&=(1+\pi^{f-1},\epsilon)_{\ell^N}(1+\pi^{f-1},\ell)_{\ell^N}^b (1+\pi^{f-1},1+c)_{\ell^N}\\
&=(1+\pi^{f-1},\ell)_{\ell}^{(N-1)r'} (1+\pi^{f-1},1+c)_{\ell^N}.
\end{align}
We compute these two factors $(1+\pi^{f-1},\ell)_{\ell}$ and $(1+\pi^{f-1},1+c)_{\ell^N}$ below.

\subsection{\ Calculation of $(1+\pi^{f-1},\ell)_{\ell}$}
First, We compute the first factor $(1+\pi^{f-1},\ell)_{\ell}$.
The conductor exponent of $(\cdot,\ell)_{\ell}$ is $f(\ell)=\ell+1$ by the Sharifi formula. Therefore, the case where $f-1\geq f(\ell)$ does not hold is only in these three cases:
\begin{itemize}
    \item (1) $w = N = \ord_\ell(c)$
    \item (2) $N=w+1, \ord_\ell(b + c) > w$
    \item (3) $N=1, w=0$
\end{itemize}

Note that $\ord_\ell(b)=N-1+\ord_\ell(N-1)$.
Then, when $N=1$, we have $b=0$ and hence $w=\ord_\ell(c)\ge 1$, so case (3) cannot occur.
Also, in case (1), since $w=N$, we must have $\ell\mid N-1$; in this case, we do not need to compute the first factor.

Then we have:

$$(1+\pi^{f-1},\ell)_{\ell}
=\begin{cases}
    (1+\pi^{\ell-2},\ell)_{\ell} &(\text{Case (2)}),\\
    1 &(\text{otherwise}).
\end{cases}$$


Thus, we will consider the Case (2). We remark that $\ell\nmid N-1$ under the case (so we have $N>1$). Let $\pi'=1-\zeta$.

Now, it holds that,
$$\ell=\prod_{1\leq i\leq \ell^N-1\atop\ell\nmid i} {(1-\zeta^i)}$$

Thus, for $t=\ell-2$, we compute
$$
(1+\pi^t,1-\zeta^i)_{\ell}
\quad (1\leq i\leq \ell^N-1,\ \ell\nmid i).
$$

In the following, $j(i)$ denotes a representative of $i\inv$ modulo $\ell^N$ and the following calculations are not affected by the choice of $j(i)$.

\begin{lemmaeigo}\label{difference}
    Let $f$ be a polynomial over a commutative ring, and let $n$ be an integer. If $n>\deg f$, then the following holds:
    $$\sum_{r=0}^{n}(-1)^r {n\choose r}f(r)=0.$$
\end{lemmaeigo}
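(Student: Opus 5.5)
The plan is to use the forward difference operator and linearity. Over a commutative ring $A$, define on functions $g\colon \Z\to A$ the operator $(\Delta g)(x)=g(x+1)-g(x)$, or equivalently $\Delta=E-1$, where $E$ is the shift $(Eg)(x)=g(x+1)$. Since $E$ and $1$ commute, the binomial theorem holds in the commutative ring of operators generated by $E$. It gives
\[
(-1)^n(\Delta^n f)(0)=((1-E)^n f)(0)=\sum_{r=0}^{n}(-1)^r{n\choose r}f(r).
\]
So the statement reduces to showing that $\Delta^n f$ vanishes identically when $n>\deg f$.

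For the vanishing step, I would show that $\Delta$ strictly lowers degree. If $f(x)=\sum_{k=0}^{d}a_kx^k$, expand $(x+1)^k-x^k$ by the binomial theorem. This is a polynomial of degree at most $k-1$ with integer coefficients, valid over any commutative ring. Hence $\deg\Delta f\le d-1$, and constants are sent to $0$. Induction on $d$ then gives $\Delta^{d+1}f=0$, so $\Delta^n f=0$ for every $n>d$. Evaluating at $0$ yields the identity. The convention $\deg 0=-\infty$ handles the zero polynomial trivially, and the case $n=0$ cannot occur unless $f=0$.

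An alternative route avoids operators entirely. By linearity in $f$, it suffices to treat $f(x)=x^k$ with $k<n$. Apply $(x\frac{d}{dx})^k$ to $(1-x)^n=\sum_r(-1)^r{n\choose r}x^r$ and evaluate at $x=1$. Each term of $(x\frac{d}{dx})^k(1-x)^n$ retains a factor $(1-x)^{n-k}$ with $n-k\ge1$, so it vanishes at $x=1$. This is a polynomial identity over $\Z$, and base change to an arbitrary commutative ring is immediate because the coefficients ${n\choose r}$ are integers.

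I do not expect a real obstacle here. The only point needing care is that no division by integers is used, for example no $1/k!$ from Taylor expansions, since $A$ may have torsion. Both routes above stay within integer coefficients, so they work over any commutative ring.
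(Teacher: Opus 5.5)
Your proposal is correct and follows essentially the same route as the paper. The paper also introduces $\Delta f(x)=f(x+1)-f(x)$, writes $\Delta^n f(x)=\sum_{r=0}^{n}(-1)^r\binom{n}{r}f(x+n-r)$ (at $x=0$ this is your identity up to the sign $(-1)^n$), and concludes from $\Delta^n f=0$ for $n>\deg f$; your integer-coefficient check that $\Delta$ lowers degree fills in the step the paper treats as evident.
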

\begin{proof}
    Define the difference operator by $$\Delta f(x)=f(x+1)-f(x).$$ Then, it is easy to see that
    $$\Delta^nf(x)=\sum_{r=0}^{n}(-1)^r {n\choose r}f(x+n-r).$$

    Since $\Delta^n f(x)=0$ when $n>\deg f$, this proves the claim.
\end{proof}

\begin{propositioneigo}
It holds that
    $$[1+\pi^t,1-\zeta^i]_{\ell}=-2c_{\ell}\pmod{\ell}$$
    where
    $$c_{\ell}=c_{\ell}(i):= \sum_{r=0}^{t} \sum_{s=0}^{t}
(-1)^r
\binom{t}{r}\binom{t}{s}
\binom{j(i)(r + s(\ell^N - 1))}{\ell}.$$
\end{propositioneigo}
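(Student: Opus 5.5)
The plan is to compute the symbol $[1+\pi^t,1-\zeta^i]_\ell$ with Vostokov's explicit formula \cite[p.~239, (1.4)]{fesenko2002local}, working in $\Q_\ell(\zeta)$ with uniformizer $\pi'=1-\zeta$. For the $\ell$-th power symbol the formula needs power-series lifts of both arguments in a variable $X$, where $X\mapsto \pi'$. The lifts will be
\[
\Phi(X)=1-(1-X)^{i},\qquad \Psi(X)=1+\bigl((1-X)-(1-X)^{-1}\bigr)^t .
\]
After substituting $j=j(i)$ we may reparametrize by $Y=1-(1-X)^i$, so that $\Phi=Y$ is a uniformizer-type variable. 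Then $\zeta^{\pm1}$ correspond to $(1-Y)^{\pm j}$, and $(1-Y)^{-j}$ may be rewritten as $(1-Y)^{j(\ell^N-1)}$ modulo the relation $(1-Y)^{j\ell^N}=1$ that holds at the point. This accounts for the exponent $j(r+s(\ell^N-1))$ in the statement.

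Expanding the $t$-th power by the binomial theorem gives
\[
\pi^t=\sum_{r,s}(-1)^{t-r}\binom{t}{r}\binom{t}{s}\,(1-Y)^{j(r+s(\ell^N-1))}
\]
up to the sign and indexing conventions. I would double-check these and arrange them to match the $(-1)^r\binom{t}{r}\binom{t}{s}$ in $c_\ell$.

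Since $\Phi=Y$ is a pure uniformizer term, the Vostokov pairing reduces to a residue of the form
\[
[\Psi,Y]_\ell=\mathrm{Tr}\,\Res\Bigl(\frac{\Psi'}{\Psi}\cdot\frac{\log \Psi^{\Delta}\ldots}{s(Y)}\Bigr),
\]
where $s(Y)=(1+Y)^{\ell}-1$-type denominator, or equivalently, following the simplification in \cite{yanagihara2025rootnumberstwistedfermat}, the coefficient extraction $[Y^{\ell}]$ of $\log\Psi$ (using $\ell$-integrality and the Frobenius term $\Delta$ contributing only modulo higher terms). Because $\pi^t$ has valuation $t=\ell-2<\ell$, only the linear term of $\log(1+\pi^t)$ survives modulo $\ell$ in degree $\ell$. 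The symbol therefore becomes, up to a constant factor, $[Y^\ell]\pi^t(Y)$. Each $(1-Y)^m$ contributes $(-1)^\ell\binom{m}{\ell}=-\binom{m}{\ell}$. This yields $-c_\ell$, and the factor $2$ should come from the normalization of $\pi=\zeta-\zeta^{-1}$ versus $\pi'$, or from the residue of $\pi'$ relative to $\ell$, which I would track explicitly.

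The main obstacle is justifying that all lower-degree and cross terms vanish modulo $\ell$. In particular, the Frobenius part $\Psi^\Delta$, the higher terms of $\log$, and the contributions from coefficients of $Y^k$ with $k<\ell$ must drop out. This is where Lemma~\ref{difference} enters. The coefficient of $Y^k$ in $\sum_r(-1)^r\binom{t}{r}(1-Y)^{a+rb}$ is a polynomial in $r$ of degree $k$, so for $k<t$ the alternating sum vanishes identically. This shows that $\pi^t(Y)$ genuinely starts in degree $t$, which legitimizes the valuation estimates. After that, the remaining step is a careful sign and factor check, reducing modulo $\ell$.
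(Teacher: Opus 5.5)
Your route is the paper's: Vostokov's formula with uniformizer $1-\zeta^i$ (your reparametrization by $Y$ amounts to this), the double-sum lift of $\pi^t$ with exponents $j(r+s(\ell^N-1))$, extraction of the coefficient of $Y^\ell$, and \Cref{difference} for the low coefficients. However, the content of the statement is the constant $-2$, and you leave it undetermined. In the paper the constant comes from the formula itself. Since the second argument has expansion $\rho(X)=X$, we get $\ell_X(\rho)=0$, which kills the term of $\Phi_{\alpha,\beta}$ containing $\epsilon'/\epsilon$; so the $\Psi'/\Psi$ factor in your displayed residue should not be there. The surviving term $-\ell_X(\epsilon)\,(X\rho(X))'/(X\rho(X))=-2\ell_X(\epsilon)/X$ supplies both the sign and the $2$. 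Your suggested sources cannot produce it: your lift of $1+\pi^t$ already encodes $\pi=\zeta-\zeta^{-1}$ exactly, so there is no leftover normalization between $\pi$ and $\pi'$. Your sign bookkeeping is also off. The coefficient of $Y^\ell$ in your expansion is $+c_\ell$, because $(-1)^{t-r}(-1)^\ell=(-1)^r$ when $t=\ell-2$. Finally, the cross terms vanish because $2t=2\ell-4>\ell$ (for the powers $C^k$, $k\ge 2$) and $\ell t>\ell$ (for $\epsilon^\Delta$), i.e. because $\ell\ge 5$, not because $t<\ell$.

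More seriously, the reduction to the coefficient of $Y^\ell$ rests on your ``$(1+Y)^\ell-1$-type'' denominator, i.e. on $1/s\equiv(\text{unit})\cdot Y^{-\ell}\pmod{\ell}$. That is the situation over $\Q_\ell(\zeta_\ell)$. For the $\ell$-th symbol over $\Q_\ell(\zeta_{\ell^N})$ with $Y\mapsto 1-\zeta^i$, one has $s(Y)=\zeta_\ell(Y)^\ell-1=(1-Y)^{j\ell^N}-1\equiv(1-Y^{\ell^N})^j-1\pmod{\ell}$. So the residue picks out, up to the unit $i$ and a sign, the coefficient of $Y^{\ell^N}$ in $\ell_Y(\Psi)$, not that of $Y^\ell$. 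Here is a quick check. The extension $K_V(\pi'^{1/\ell})/K_V$ with $\pi'=1-\zeta$ has conductor exponent $\ell^N+1$, so $(1+\pi'^{\ell^N},\pi')_\ell\neq 1$. Yet any recipe reading off the coefficient of $Y^\ell$ in $\ell_Y(\Psi)$ returns $0$ for $\Psi=1+Y^{\ell^N}$ when $N\ge 2$. The paper's proof makes the same replacement ($\Phi_{\alpha,\beta}/s\equiv X^{-\ell}(\cdots)$), and $N\ge 2$ is forced in Case~(2). Following the paper therefore does not close this gap: an argument along these lines must control the coefficient of $Y^{\ell^N}$ in $\ell_Y(\Psi)$ instead.
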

\begin{proof}

For simplicity, we write $j=j(i)$.
We use Vostokov's formula \cite[p.239, (1.4)]{fesenko2002local}.
We use the notation therein.
Since $$\pi=-(1+\zeta\inv)(1-\zeta),$$ we have $\epsilon(X)=1+((1-X)^j-1)^t(1+(1-X)^{j(\ell^N-1)})^t$. Moreover, since $\rho(X)=X$, we have $\ell_X(\rho)=0$. Therefore,

$$\frac{\Phi_{\alpha,\beta}}{s(X)}
\equiv \frac{1}{X^{\ell}}\left(-\ell_X(\epsilon)\frac{(X\rho(X))'}{X\rho(X)}\right)
\equiv -\frac{2}{X^{\ell+1}}\frac{1}{\ell}\frac{\epsilon^{\ell}-\epsilon^{\Delta}}{\epsilon^{\Delta}} \pmod{\ell}.$$

Therefore, writing $\epsilon^{\ell}-\epsilon^{\Delta}=\sum_{k\geq 0} a_k X^k$ and noting that $a_0=0$, the residue is

$$\Res (\frac{\Phi_{\alpha,\beta}}{s(X)})\equiv -2\frac{a_{\ell}}{\ell} \pmod{\ell}.$$

Thus, we only have to compute $a_{\ell} \pmod{\ell^2}$.

First, we have $[X^{\ell}]\epsilon(X)^{\Delta}=0$. Next we compute $\bigl[ X^\ell \bigr] \epsilon(X)^{\ell}$.

Let
\[
A(X) := (1-X)^j - 1, \qquad B(X) := 1 + (1-X)^{j(\ell^N-1)}
\]
.

Expanding these, we have
\[
A(X)^{t}
= \sum_{r=0}^{t} (-1)^{\,t-r}
\binom{t}{r} (1-X)^{j r},
\qquad
B(X)^{t}
= \sum_{s=0}^{t} \binom{t}{s} (1-X)^{j(\ell^N-1)s}.
\]

Hence
\begin{align*}
A(X)^{t} B(X)^{t}
&= \sum_{r=0}^{t} \sum_{s=0}^{t}
(-1)^{\,t-r}
\binom{t}{r}\binom{t}{s}
(1-X)^{j\{r + s(\ell^N-1)\}}.
\end{align*}

Here, we note that
\[
\bigl[ X^n \bigr](1-X)^M = (-1)^n \binom{M}{n}.
\]
Writing $C(X):=A(X)^{t} B(X)^{t}=\dsum_{k\geq 0} c_k X^k$, we have
\begin{equation}
c_n= \sum_{r=0}^{t} \sum_{s=0}^{t}
(-1)^r
\binom{t}{r}\binom{t}{s}
\binom{j(r + s(\ell^N - 1))}{n}.
\end{equation}

Moreover, since
$$c_n= \sum_{s=0}^{t}\binom{t}{s}
\sum_{r=0}^{t} 
(-1)^r
\binom{t}{r}
\binom{j(r + s(\ell^N - 1))}{n},$$
we have $c_n=0$ for $0\leq n< t=\ell-2$ by \Cref{difference}. Therefore,
\begin{equation}
\bigl[ X^\ell \bigr]\bigl(\epsilon(X)^{\ell}\bigr)
= \sum_{k=0}^{\ell} \binom{\ell}{k} [X^{\ell}]C(X)^k
= \ell c_{\ell}
\end{equation}

Thus $a_{\ell}=\bigl[ X^\ell \bigr]\bigl(\epsilon(X)^{\ell}\bigr)=\ell c_{\ell}.$.
\end{proof}

\begin{corollaryeigo}
    It holds that $$[1+\pi^t,\ell]_{\ell}=0\pmod{\ell}$$
\end{corollaryeigo}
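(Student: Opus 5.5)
The plan is to use the factorization $\ell=\prod_{1\le i\le \ell^N-1,\ \ell\nmid i}(1-\zeta^i)$ together with the additivity of the additive symbol $[\cdot,\cdot]_\ell$ in the second argument. By the preceding proposition this gives
$$[1+\pi^t,\ell]_\ell\equiv -2\sum_{i}c_\ell(i)\pmod{\ell}.$$
It therefore suffices to show $\sum_i c_\ell(i)\equiv 0\pmod \ell$, the sum running over $1\le i\le \ell^N-1$ with $\ell\nmid i$.

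\textbf{Reindexing by $j$.} Since $c_\ell(i)$ does not depend on the choice of representative $j(i)$ (as stated before the lemma), I am free to choose it. Take $j(i)$ to be the representative of $i^{-1}$ in $[1,\ell^N]$. As $i$ runs over the units modulo $\ell^N$, so does $j$, so the sum becomes $\sum_{j}$ over $1\le j\le \ell^N$ with $\ell\nmid j$. Put $m_{r,s}=r+s(\ell^N-1)$ and expand
$$\binom{x}{\ell}=\sum_{k=1}^{\ell}p_kx^k,\qquad p_k=\frac{s(\ell,k)}{\ell!},$$
where $s(\ell,k)$ are Stirling numbers of the first kind. Writing $S_k=\sum_{j}j^k$ for the power sum over units, we get
$$\sum_j c_\ell(j)=\sum_{k=1}^{\ell}p_kS_k\sum_{s=0}^{t}\binom{t}{s}\sum_{r=0}^{t}(-1)^r\binom{t}{r}m_{r,s}^k.$$
For fixed $s$, $m_{r,s}^k$ is a polynomial in $r$ of degree $k$. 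By \Cref{difference} the inner sum over $r$ vanishes whenever $k<t=\ell-2$. So only $k\in\{\ell-2,\ell-1,\ell\}$ survive, and for these the $r,s$-sums are $\ell$-integral.

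\textbf{Divisibility of the surviving terms.} Two standard facts are needed.

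First, for $1<k<\ell$ we have $\ell\mid s(\ell,k)$, because $x(x-1)\cdots(x-\ell+1)\equiv x^\ell-x\pmod \ell$. Hence $p_{\ell-2},p_{\ell-1}\in\Z_{(\ell)}$; since $\ell-2\ge 1$ these are exactly the middle coefficients. The top coefficient is $p_\ell=1/\ell!$, which has exactly one factor $\ell$ in the denominator.

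Second, I need power-sum congruences. Since $\zeta$-Case (2) forces $N>1$, we are in the situation $N\ge2$. There $S_k\equiv0\pmod{\ell^{N-1}}$ for every $k$, and $S_k\equiv 0\pmod{\ell^N}$ whenever $(\ell-1)\nmid k$. The latter follows by multiplying the unit group by a generator $g$ with $g^k\not\equiv1\pmod{\ell}$, which is possible since the group is cyclic. Since $\ell$ is odd, $(\ell-1)\nmid \ell$, so $\ell^2\mid S_\ell$ and hence $p_\ell S_\ell\equiv0\pmod\ell$. For $k=\ell-2,\ell-1$, the factor $p_k$ is already $\ell$-integral and $\ell^{N-1}\mid S_k$ with $N-1\ge1$, so these terms are also $\equiv 0\pmod{\ell}$. (For $k=\ell-2$ with $\ell>3$ one even has $\ell^N\mid S_k$.)

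Summing these gives $\sum_j c_\ell(j)\equiv0\pmod\ell$, and hence the corollary.

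The main obstacle I expect is the bookkeeping of $\ell$-adic valuations in the $k=\ell$ term. There the denominator $\ell!$ must be absorbed by $S_\ell$, which requires both $N\ge2$ and $(\ell-1)\nmid\ell$. I would also double-check that the small case $\ell=3$, where $t=1$, causes no extra trouble with the vanishing range from \Cref{difference}.
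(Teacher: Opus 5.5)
Your first step (additivity in the second argument plus the factorization of $\ell$, giving $[1+\pi^t,\ell]_\ell\equiv -2\sum_i c_\ell(i)$) is the same as the paper's. After that your route is genuinely different, and it is sound apart from one slip at $\ell=3$, described below.

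\textbf{Comparison with the paper.} The paper rewrites $c_\ell(i)$ as $[X^\ell]\bigl(1+(1+X)^{j(\ell^N-1)}\bigr)^t\bigl(1-(1+X)^{j}\bigr)^t$ and reduces in $\F_\ell[[X]]/(X^{\ell+1})$, where $N\ge 2$ gives $(1+X)^{j\ell^N}\equiv 1$. It then asserts that $c_\ell(i)\bmod\ell$ depends only on $i\bmod\ell$, so the sum is $\ell^{N-1}$ times a sum over $(\Z/\ell)^{\times}$.

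You instead expand $\binom{x}{\ell}$ via Stirling numbers and move everything onto the power sums $S_k$. Your bound $\ell^{N-1}\mid S_k$ plays the role of the paper's ``each class mod $\ell$ has $\ell^{N-1}$ lifts''. Your extra input, $\ell^N\mid S_k$ for $(\ell-1)\nmid k$, absorbs the $1/\ell!$ that the paper avoids by working mod $\ell$ from the start. The paper's version is shorter. Yours is an exact identity in $\Z_{(\ell)}$ and makes the use of $N\ge 2$ fully explicit. For the $k=\ell-1$ term you only need $\ell\mid S_{\ell-1}$, which is just Fermat: $S_{\ell-1}\equiv \ell^{N-1}(\ell-1)\pmod\ell$.

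\textbf{The slip at $\ell=3$.} This is exactly the case you flagged. For $\ell=3$ you have $\ell-2=1$, which is not in the range $1<k<\ell$. Indeed $p_1=s(3,1)/3!=1/3$, so the claim $p_{\ell-2}\in\Z_{(\ell)}$ is false there.

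The repair uses your own tool. Since $0<\ell-2<\ell-1$, we have $(\ell-1)\nmid(\ell-2)$ for every odd $\ell$, so your restriction to $\ell>3$ is unnecessary and $\ell^N\mid S_{\ell-2}$ always holds. Treat $k=\ell-2$ exactly like $k=\ell$: $v_\ell(p_k)\ge -1$ and $v_\ell(S_k)\ge N\ge 2$. Then only $k=\ell-1$, the unique exponent in $\{\ell-2,\ell-1,\ell\}$ divisible by $\ell-1$, needs $p_{\ell-1}\in\Z_{(\ell)}$. With this change the argument is uniform in $\ell$; for example, for $\ell=3$, $N=2$ one has $S_1=27$, so $p_1S_1=9$.

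\textbf{A remark on the paper's route.} $\ell=3$ is also delicate there. The coefficient $\binom{j}{3}$ enters, so $c_3(i)\bmod 3$ depends on $j\bmod 9$. For $N=2$, the values $j=1,4,7$ give $c_3\equiv 2,0,1$, so the claim that $c_\ell(i)$ depends only on $i\bmod\ell$ fails, although the total sum still vanishes mod $3$. Your patched argument covers this case with no extra work.
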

\begin{proof}
    First, by the proposition above, we have
    $$[1+\pi^t,\ell]_{\ell}\equiv -2\sum_{1\leq i\leq \ell^N-1\atop\ell\nmid i}c_{\ell}(i)\pmod{\ell}.$$
    \begin{align}
        c_{\ell}(i)
        &= [X^{\ell}]\sum_{r=0}^{t} \sum_{s=0}^{t}
(-1)^r
\binom{t}{r}\binom{t}{s}
(1+X)^{j(i)(r + s(\ell^N - 1))}\\
&= [X^{\ell}]\sum_{s=0}^{t}\binom{t}{s}(1+X)^{j(i)s(\ell^N - 1)}
\sum_{r=0}^{t} 
(-1)^r
\binom{t}{r}(1+X)^{j(i)r}\\
&= [X^{\ell}](1+(1+X)^{j(i)(\ell^N - 1)})^t(1-(1+X)^{j(i)})^t.
    \end{align}

    Since we are in Case (2), it holds that $N\geq 2$. Then, in the ring
    $\F_{\ell}[[X]]/(X^{\ell+1})$, it is easy to see that
    $$c_{\ell}(i)\equiv [X^{\ell}]D(X)^t E(X)^t,$$
    where $D(X):=1+(1+X)^{-j(i)}$ and $E(X):=1-(1+X)^{j(i)}$.


    Next, write $D(X)=\dsum_{k\geq 0} d_k X^k$ and
    $E(X)=\dsum_{k\geq 0} e_k X^k$.

    The value of $c_{\ell}(i)$ is determined by
    $d_1,d_2,d_3,e_0,e_1,e_2 \pmod{\ell}$. By Lucas' theorem, these
    coefficients depend only on $j(i)\pmod{\ell}$.

    Therefore, $c_{\ell}(i)$ depends only on $i \pmod{\ell}$. It follows that
    $$\sum_{i \in(\Z/\ell^N)\x}c_{\ell}(i)
    =\sum_{a \in(\Z/\ell)\x}\sum_{i \in(\Z/\ell^N)\x \atop i\equiv a \pmod{\ell}}c_{\ell}(a)
    =\sum_{a \in(\Z/\ell)\x}\ell^{N-1}c_{\ell}(a)
    \equiv 0\pmod{\ell}$$
    and the claim is proved.

\end{proof}




\subsection{\ Calculation of $(1+\pi^{f-1},1+c)_{\ell^N}$}\label{c}

Next we compute the second factor $(1+\pi^{f-1},1+c)_{\ell^N}$.

To compute the second factor, it suffices to consider the cases $f>0$. According to Sharifi's formula and the above argument, this happens only when $1\leq w<N$ and $w=N=\ord_{\ell}(c)$. In particular, in the latter case, we have $\ord_\ell(b+c)=N=w$, and hence

$$f =
    \begin{dcases}
        2\ell^{N-w} & (1 \leq w \leq N,\ \ord_\ell(b + c) = w), \\
        \ell^{N-w-1}(\ell-1) & (1 \leq w < N,\ \ord_\ell(b + c) > w). 
    \end{dcases}$$

In these cases, by equation (4.3) in the proof of \cite[Proposition 4.8]{yanagihara2025rootnumberstwistedfermat}, we obtain
$$[1 + \pi^{f-1}, 1+c] = (1-f)\cdot\frac{2c}{\ell} \cdot J(N,f)\cdot \sum_{1\leq k\leq N} (-1)^{k+1}\frac{c^{k-1}}{k} \pmod{\ell^N}$$
where $J(n,f)$ is as in \cite[Definition 1.1]{yanagihara2025rootnumberstwistedfermat}.



From the above, we obtain the following:

\begin{propositioneigo}\label{value of hilbert symbol}
    \begin{align}
        (1+\pi^{f-1},a)_{\ell^N}
        &=\begin{cases}
            \zeta_{\ell}^{\pm\frac{2c}{\ell^N}J(N,2\ell^{N-w})} &(1 \leq w \leq N,\ \ord_\ell(b + c) = w),\\
            \zeta_{\ell}^{\frac{2ce}{\ell^N}J(N,\ell^{N-w-1}(\ell-1))} &(1 \leq w < N,\ \ord_\ell(b + c) > w),
        \end{cases}
    \end{align}
    where $\pm$ is $-1$ if and only if $w=N$ and $e=\begin{dcases}
        2 &(w=N-1)\\
        1 &(\text{otherwise})
    \end{dcases}$.
\end{propositioneigo}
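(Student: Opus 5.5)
We start from the factorization already established,
\[
(1+\pi^{f-1},a)_{\ell^N}=(1+\pi^{f-1},\ell)_{\ell}^{(N-1)r'}\,(1+\pi^{f-1},1+c)_{\ell^N},
\]
and evaluate the two factors separately. The first factor is trivial except in Case (2). In Case (2) we have $f-1=\ell-2=t$, and the Corollary above gives $[1+\pi^{t},\ell]_{\ell}\equiv 0\pmod{\ell}$. So in every case the first factor equals $1$, and it remains to compute $(1+\pi^{f-1},1+c)_{\ell^N}$ for the two values of $f$ listed in Subsection~\ref{c}.

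For the second factor we use the congruence from equation (4.3) of the previous paper:
\[
[1+\pi^{f-1},1+c]\equiv(1-f)\cdot\frac{2c}{\ell}\cdot J(N,f)\cdot\sum_{1\le k\le N}(-1)^{k+1}\frac{c^{k-1}}{k}\pmod{\ell^N}.
\]
Write $\ord_\ell(c)=v\ge w\ge 1$. We will show that the exponent of $\zeta_{\ell^N}$ is divisible by $\ell^{N-1}$. Then $(1+\pi^{f-1},1+c)_{\ell^N}$ is an $\ell$-th root of unity, and the exponent becomes an exponent of $\zeta_\ell$ after dividing by $\ell^{N-1}$. This is where the normalization $\frac{2c}{\ell^N}$ in the statement comes from. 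To see this we need a lower bound on $\ord_\ell$ of $J(N,f)$, or at least that $\frac{c}{\ell}J(N,f)$ is divisible by $\ell^{N-1}$. We take this from the definition of $J(n,f)$ in \cite[Definition 1.1]{yanagihara2025rootnumberstwistedfermat} together with the bound already used in the previous paper when $f=2\ell^{N-w}$ or $f=\ell^{N-w-1}(\ell-1)$.

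Next we simplify the auxiliary factors modulo the relevant power of $\ell$. The logarithm factor $\sum_k(-1)^{k+1}c^{k-1}/k$ is $1+O(c)$. Since only its residue modulo $\ell$ matters once we reduce to an exponent of $\zeta_\ell$, and the terms with $k\ge 2$ have positive valuation because $v\ge1$ (with $\ell\mid k$ compensated by $c^{k-1}$), it can be replaced by $1$. The factor $1-f$ is handled by cases:
\begin{itemize}[label={}]
\item If $f=2\ell^{N-w}$ with $w<N$, then $1-f\equiv 1\pmod{\ell}$.
\item If $w=N$, then $f=2$ and $1-f=-1$. This produces the sign $\pm$, which is $-1$ exactly when $w=N$.
\item If $f=\ell^{N-w-1}(\ell-1)$ with $w<N-1$, then $1-f\equiv1$.
\item If $w=N-1$, then $f=\ell-1$ and $1-f=2-\ell\equiv 2$. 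This produces $e=2$.
\end{itemize}
Assembling these gives the stated formula.

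The main obstacle is the divisibility step: we must justify that the reduction to $\zeta_\ell$, together with replacing the factors $1-f$ and the logarithm by their residues modulo $\ell$, is legitimate. This requires knowing $\ord_\ell\bigl(\tfrac{c}{\ell}J(N,f)\bigr)\ge N-1$ and that $\tfrac{c}{\ell^N}J(N,f)$ is $\ell$-integral. If $J$ does not supply enough divisibility by itself, we would instead argue that the symbol has order dividing $\ell$ because $1+\pi^{f-1}$ lies just below the conductor $f$. The pairing $U^{(f-1)}/U^{(f)}\to\mu_{\ell^N}$ then lands in $\mu_\ell$, which forces the divisibility.
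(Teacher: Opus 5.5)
\paragraph{Verdict.} Your proposal has the same structure as the paper's argument and inherits its main flaw: the claim that the first factor is always trivial is false.

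\paragraph{The gap: the first factor is not trivial.} You follow the paper's route step for step. You split off $(1+\pi^{f-1},\ell)_\ell^{(N-1)r'}$ and declare it trivial, using the ``three cases'' discussion together with the Corollary. You then evaluate $(1+\pi^{f-1},1+c)_{\ell^N}$ by (4.3), and reduce $1-f$ modulo $\ell$ to obtain $\pm$ and $e$.

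Your argument that the symbol lies in $\mu_\ell$, because $1+\pi^{f-1}$ generates $U^{(f-1)}/U^{(f)}$, is sound. It even passes to the second factor without knowing the first factor is $1$, since that factor lies in $\mu_\ell$ anyway.

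The gap is the step ``in every case the first factor equals $1$'', which you import from the paper. It rests on the claim that $(\cdot,\ell)_\ell$ has conductor exponent $\ell+1$. That is its conductor over $\Q_\ell(\zeta_\ell)$, not over $K_V=\Q_\ell(\zeta_{\ell^N})$.
\begin{itemize}
\item For $N\ge 2$, write $(\cdot,\ell)_\ell=(\cdot,\ell^{\ell^{N-1}})_{\ell^N}$ and apply the quoted Sharifi formula with $b=\ell^{N-1}$, $c=0$. This gives $w=N-1=\ord_\ell(b+c)$, so the conductor is $2\ell$.
\item Equivalently, norm down to $\Q_\ell(\zeta_\ell)$ and use $\psi_{K_V/\Q_\ell(\zeta_\ell)}(\ell)=2\ell-1$.
\end{itemize}
In the branch $w=N-1$, $\ord_\ell(b+c)=w$, you have $f=2\ell$. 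Then $1+\pi^{f-1}=1+\pi^{2\ell-1}$ generates $U^{(2\ell-1)}/U^{(2\ell)}$, on which $(\cdot,\ell)_\ell$ is nontrivial. So the first factor is a primitive $\ell$-th root of unity raised to the power $(N-1)r'$. It is nontrivial whenever $\ell\nmid N-1$, and for $N=2$ this is the entire first branch.

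\paragraph{A concrete counterexample.} Take $\ell=3$, $N=2$, $(r,s,t)=(3,5,1)$, $\delta=2$.
\begin{itemize}
\item Then $a=3^3\cdot 6400000$, with $\epsilon=1$ and $1+c=6400000\equiv 1\pmod{27}$.
\item This gives $w=1=\ord_3(b+c)$ and $f=6$.
\item Since $1+c\in(1+3\Z_3)^9$, the second factor is trivial.
\item Yet $(1+\pi^5,a)_9=(1+\pi^5,3)_3\neq 1$.
\end{itemize}
An argument that only evaluates the second factor therefore cannot give the right value here. To repair it you must compute $(1+\pi^{2\ell-1},\ell)_\ell$ explicitly. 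For instance, the norm to $\Q_\ell(\zeta_\ell)$ maps $U^{(2\ell-1)}/U^{(2\ell)}$ isomorphically onto $U^{(\ell)}/U^{(\ell+1)}$, where the Artin--Hasse formulas apply.

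\paragraph{The Case (2) input also fails for $\ell=3$.} The Corollary you rely on in Case (2) fails at least for $\ell=3$. Take $N=2$ and $\omega=\zeta^3$.
\begin{itemize}
\item A direct computation gives $N_{\Q_3(\zeta_9)/\Q_3(\omega)}(1+\zeta-\zeta^{-1})=5+2\omega=\omega^2\bigl(1-3(1-\omega)\bigr)$.
\item We have $(\omega,3)_3=1$, since $3=-\omega^2(1-\omega)^2$ and $(\omega,-1)=(\omega,\omega)=(\omega,1-\omega)=1$.
\item The element $1-3(1-\omega)$ lies in $U^{(3)}\setminus U^{(4)}$, while $(\cdot,3)_3$ over $\Q_3(\omega)$ has conductor exponent $4$.
\item By norm compatibility of the Hilbert symbol, $(1+\pi,3)_3\neq 1$ over $\Q_3(\zeta_9)$.
\end{itemize}
So in Case (2) with $\ell=3$ (for example $(3,5,1)$ with $\delta=1$), the first factor $(1+\pi,3)_3^{(N-1)r'}$ is again nontrivial and is missing from the formula.
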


\section{Computation of the root number}

Based on the above calculations, we need to compute the root number $W(\phi_{\delta}^{(i)})$.
As in \cite{yanagihara2025rootnumberstwistedfermat}, we first compute the relative root number $W(\phi_{\delta,\pi}, \eta_{\pi_F})$ by using Rohrlich's formula \cite[Proposition 2]{rohrlich1992root}.



\begin{propositioneigo}\label{relative}
   The conductor exponent of $\phi_{\delta,\pi}$ is given by
   \begin{align}
        f'
        &=\begin{cases}
            2\ell^{N-w} & (1 \leq w \leq N,\ \ord_\ell(b + c) = w), \\
        \ell^{N-w-1}(\ell-1) & (1 \leq w < N,\ \ord_\ell(b + c) > w), \\
            1 &(\text{otherwise}),
        \end{cases}
    \end{align}

    \begin{align}
        &W(\phi_{\delta,\pi},\eta_{\pi_F})\\
        =&\begin{dcases}
            -\leg{\pm r'st\frac{c}{\ell^N}J(N,2\ell^{N-w})}{\ell} &(1 \leq w \leq N,\ \ord_\ell(b + c) = w),\\
            -\leg{r'st\frac{ce}{\ell^N}J(N,\ell^{N-w-1}(\ell-1))}{\ell} &(1 \leq w < N,\ \ord_\ell(b + c) > w),\\
            \leg{2}{\ell} &(\text{otherwise}),
        \end{dcases}
    \end{align}
    
    where $\pm$ is $-1$ if and only if $w=N$ and $e=\begin{dcases}
        2 &(w=N-1),\\
        1 &(\text{otherwise}).
    \end{dcases}$
            
\end{propositioneigo}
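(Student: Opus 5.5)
The plan is to follow the template of \cite[Proposition 4.8]{yanagihara2025rootnumberstwistedfermat}, now feeding in the Hilbert symbol values of \Cref{value of hilbert symbol} in place of those from the case $\ell\nmid rst$.

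\textbf{Step 1: the conductor exponent.} Recall from \cite[Section 3]{yanagihara2025rootnumberstwistedfermat} that the restriction of $\phi_{\delta,\pi}$ to $\mathcal{O}_{K_V}^\times$ is, up to the tame part coming from the Jacobi sum (of conductor exponent $1$), a product of norm residue symbols $(\cdot,a)_{\ell^N}$ raised to powers involving $r,s,t$. The wild part has conductor exponent given by Sharifi's formula applied to $a=r^rs^s(\ell^N-t)^t\delta^{r+s}$. The discussion preceding \Cref{value of hilbert symbol} shows which cases can actually occur. Since $b=\ell^{N-1}(N-1)r'$ and $\ell\nmid\delta$, the case $w=0$ is impossible, and the case $w=N=\ord_\ell(c)$ merges into the first line. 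Whenever Sharifi's formula gives $f(a)=0$, only the tamely ramified Jacobi-sum factor survives, giving $f'=1$. This yields the stated formula for $f'$.

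\textbf{Step 2: the relative root number via Rohrlich.} Apply Rohrlich's formula \cite[Proposition 2]{rohrlich1992root} with $\pi_F=\pi^2$. It expresses $W(\phi_{\delta,\pi},\eta_{\pi_F})$ as
\[
\leg{2}{\ell}^{f'} \cdot \phi_{\delta,\pi}(1+\pi^{f'-1})\quad\text{(up to the normalizing sign)},
\]
using that $\phi_{\delta,\pi}$ takes the value $\pm1$ on $1+\pi^{f'-1}$ because of its behaviour under complex conjugation. The decomposition of $\phi_{\delta,\pi}(1+\pi^{f'-1})$ follows \cite[Proposition 4.8]{yanagihara2025rootnumberstwistedfermat}. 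When $f'=1$, the tame character contributes exactly as in the previous paper, giving $\leg{2}{\ell}$.

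When $f'>1$, the relevant value is $(1+\pi^{f-1},a)_{\ell^N}$ raised to an exponent proportional to $rst/\ell^{N-1}$, that is, to $r'st$. The factor $\ell^{N-1}$ in $r$ is exactly absorbed because the symbol lands in $\mu_\ell$. Substituting \Cref{value of hilbert symbol} gives $\zeta_\ell^{\,x}$ with $x=\pm 2r'st\frac{c}{\ell^N}J(N,f)$ (times $e$ in the second case), up to a unit constant. Rohrlich's recipe converts such a value into the Legendre symbol $-\leg{x/2}{\ell}$, the factor $2$ being cancelled against the $\leg{2}{\ell}$ from $\pi_F$-normalization. The overall minus sign arises from $f'$ being even, resp.\ from the Gauss-sum sign as in the earlier work.

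\textbf{Main obstacle.} The delicate point is the bookkeeping of the exponent relating $\phi_{\delta,\pi}$ to $(\cdot,a)_{\ell^N}$ when $\ell\mid r$. Specifically, one must check that dividing by $\ell^{N-1}$ is legitimate, namely that $(\cdot,a)_{\ell^N}^{\ell^{N-1}}=(\cdot,a)_\ell$-type reduction matches the factor $\frac{c}{\ell^N}$. One must also check that the first factor $(1+\pi^{f-1},\ell)_\ell$ vanishes, which is given by the Corollary above. I would verify this by tracking $\ord_\ell$ of each term in the exponent, and by confirming that the sign conventions ($\pm$ for $w=N$, and $e=2$ for $w=N-1$) are inherited verbatim from \Cref{value of hilbert symbol}.
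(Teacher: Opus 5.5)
Your Step 1 agrees with the paper. Since $b=\ell^{N-1}(N-1)r'$, the case $w=0$ is impossible; the case $w=N=\ord_\ell(c)$ merges into the first line; and when $f(a)=0$ only the tame quadratic factor remains, so $f'=1$.

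\textbf{The gap is in Step 2, where you misdescribe what gets evaluated.}
\begin{itemize}
\item[(i)] By the Coleman--McCallum formula, $\phi_{\delta,\pi}(x)=(x,a)_{\ell^N}(x,\pi_{\rm{CM}})_2$ for units $x$, with $a=r^rs^s(\ell^N-t)^t\delta^{r+s}$. All dependence on $r,s,t$ is already inside $a$, and $(\cdot,\pi_{\rm{CM}})_2$ is trivial on principal units.
\item[(ii)] Hence $\phi_{\delta,\pi}(1+\pi^{f'-1})=(1+\pi^{f'-1},a)_{\ell^N}=\zeta_\ell^{l}$, with $l$ read off directly from \Cref{value of hilbert symbol}. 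It is not that symbol raised to a power proportional to $r'st$.
\item[(iii)] It is also not $\pm1$. In $\mu_\ell$ with $\ell$ odd, $\pm1$ means $1$, i.e.\ $l\equiv 0$, and the Legendre symbol in the answer would vanish.
\item[(iv)] Rohrlich's formula is not of the shape $\leg{2}{\ell}^{f'}\phi_{\delta,\pi}(1+\pi^{f'-1})$; that expression is not even real.
\end{itemize}
What the computation of \cite[Proposition~4.11]{yanagihara2025rootnumberstwistedfermat} actually gives here is
\[
W(\phi_{\delta,\pi},\eta_{\pi_F})=-\leg{-2r'st\,l}{\ell}\,i^{\frac{\ell^{N-1}(\ell-1)}{2}f'}\qquad(f'>1),
\]
and $\leg{2}{\ell}$ for $f'=1$. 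So $r'st$ enters through Rohrlich's formula itself, as a Legendre-symbol factor multiplying the discrete logarithm $l$.

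\textbf{The step you are missing to finish.} For both possible values $f'=2\ell^{N-w}$ and $f'=\ell^{N-w-1}(\ell-1)$ one has $i^{\frac{\ell^{N-1}(\ell-1)}{2}f'}=\leg{-1}{\ell}$. This cancels the $-1$ inside the symbol. Then $\leg{4}{\ell}=1$ absorbs the $2$ in $l=\pm\frac{2c}{\ell^N}J(N,f')$ (resp.\ $\frac{2ce}{\ell^N}J(N,f')$) together with the $2$ in $-2r'st$.

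Your final formula comes out right only because two errors offset each other. You omit $r'st$ from Rohrlich's formula and reinsert it as a spurious exponent on the Hilbert symbol. With the correct formula, your exponent claim would make $r'st$ appear squared and drop out of the answer.

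Your ``main obstacle'' about dividing $(\cdot,a)_{\ell^N}$ by $\ell^{N-1}$ is a red herring. The only place $\ell^{N-1}$ is absorbed is $(x,\ell)_{\ell^N}^{b}=(x,\ell)_\ell^{(N-1)r'}$. That is already handled inside \Cref{value of hilbert symbol}: it is trivial by the conductor bound outside Case (2), and by the Corollary in Case (2).
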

\begin{proof}
As in the proof of \cite[Proposition~4.11]{yanagihara2025rootnumberstwistedfermat}, we first obtain the following from the Coleman--McCallum formula \cite[Theorem~5.3]{coleman1988stable}:
	$$\phi_{\delta,\pi}(x)=(x,\delta)_{\ell^N}^{r+s} j_{r,s,t,\pi}(x)
	=(x,r^rs^s(\ell^N-t)^t \delta^{r+s})_{\ell^N}(x,\pi_{\rm{CM}})_2\ (x\in \mc{O}_{\Q_{\ell}(\zeta)}\x).$$

    Here $\pi_{\rm{CM}}$ is a uniformizer satisfying $\pi_{\rm{CM}}^{\ell-1}=-\ell$ and $\pi_{\rm{CM}}/(1-\zeta^{\ell^{N-1}})\equiv 1\pmod{\pi_{\rm{CM}}}$.

Next, Sharifi's formula gives

$$f' =
    \begin{dcases}
        \ell^{N-1}(\ell+1) & (w = 0), \\
        2\ell^{N-w} & (1 \leq w < N,\ \ord_\ell(b + c) = w), \\
        \ell^{N-w-1}(\ell-1) & (1 \leq w < N,\ \ord_\ell(b + c) > w), \\
        2 & (w = N = \ord_\ell(c)), \\
        1 & (\text{$w > N$ or $w = N \neq \ord_\ell(c)$}),
    \end{dcases}$$
    and the first assertion follows.
    
Let $l$ be the integer defined by $\phi_{\delta,\pi}(1+\pi^{f'-1})=\zeta_{\ell}^l$.
By the same computation as in the proof of \cite[Proposition 4.11]{yanagihara2025rootnumberstwistedfermat}, we obtain
    \begin{align}
    W(\phi_{\delta,\pi},\eta_{\pi_F})
=\begin{dcases}
    \leg{2}{\ell} &(f'=1),\\
    -\leg{-2r'stl}{\ell}i^{\frac{\ell^{N-1}(\ell-1)}{2}f'} &(f'>1).
\end{dcases}
\end{align}
Together with the fact that, when $f'>1$, we have $i^{\frac{\ell^{N-1}(\ell-1)}{2}f'}=\leg{-1}{\ell}$, this yields the conclusion.
\end{proof}

Combining the calculations so far, we obtain the following.
\begin{theoremeigo}\label{main}
Let $\epsilon'_N \in \mu_{\ell-1}(\Q_{\ell})$, $b'_N \in \Z$, and $c_N' \in \ell \Z_{\ell}$ be such that
$r^r s^s (\ell^N-t)^t \delta^{r+s} = \epsilon'_N \ell^{b'_N}(1 + c_N')$. Let $w=\min {\ord_{\ell}(b_N'),\ord_{\ell}(c_N')}$.

Then the root number of $\phi_{\delta}^{(N)}$ is
$$W(\phi_{\delta}^{(N)})=\dprod_{\text{$p\le \infty$}}W_p(\phi_{\delta}^{(N)})$$
where, for $p\neq \ell$,
$$W_p(\phi_{\delta}^{(N)})
=\begin{dcases}
	i^{-\frac{\ell^{N-1}(\ell-1)}{2}}\ &(p=\infty),\\
	\leg{p}{\ell}\ &(p\mid \delta),\\
	1\ &(p\nmid \delta),
\end{dcases} $$
and
\begin{align}
        &W_{\ell}(\phi_{\delta}^{(N)})\\
        =&\begin{dcases}
            -\leg{\pm r'st\frac{c_N'}{\ell^N}J(N,2\ell^{N-w})}{\ell}i^{\frac{\ell^{N-1}(\ell-1)}{2}} &(1 \leq w \leq N,\ \ord_\ell(b_N'+c_N') = w),\\
            -\leg{r'st\frac{c_N'e}{\ell^N}J(N,\ell^{N-w-1}(\ell-1))}{\ell}i^{\frac{\ell^{N-1}(\ell-1)}{2}} &(1 \leq w < N,\ \ord_\ell(b_N'+c_N') > w),\\
            \leg{2}{\ell}i^{\frac{\ell^{N-1}(\ell-1)}{2}} &(\text{otherwise}),
        \end{dcases}
    \end{align}
    
    where $\pm$ is $-1$ if and only if $w=N$ and $e=\begin{dcases}
        2 &(w=N-1)\\
        1 &(\text{otherwise})
    \end{dcases}$.
\end{theoremeigo}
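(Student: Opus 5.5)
The plan is to assemble the global root number from the local pieces already computed, as in the corresponding argument of \cite{yanagihara2025rootnumberstwistedfermat}. By definition $W(\phi_{\delta}^{(N)})=\prod_{p\le\infty}W_p(\phi_{\delta}^{(N)})$. For $p\neq\ell$ (including $p=\infty$) the local factors are exactly those of \cite[Proposition 4.4]{yanagihara2025rootnumberstwistedfermat}, recalled above; that computation used no hypothesis on $r$ or on $\ord_\ell(\delta)$. So the first displayed formula is immediate, and only $W_\ell$ needs work.

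For $p=\ell$ there is a single place $V$ of $K=\Q(\zeta_{\ell^N})$ above $\ell$, so $W_\ell(\phi_{\delta}^{(N)})=W(\phi_{\delta,V})$. I will use the defining property of the relative root number,
\[
W(\phi_{\delta,V})=W(\phi_{\delta,\pi},\eta_{\pi_F})\,W(\eta_{\pi_F}).
\]
The second factor is $i^{\frac{\ell^{N-1}(\ell-1)}{2}}$ by the recalled result from \cite[Section 4.3.1]{yanagihara2025rootnumberstwistedfermat}. The first factor is given by \Cref{relative}. Multiplying the three cases of \Cref{relative} by $i^{\frac{\ell^{N-1}(\ell-1)}{2}}$ gives exactly the three cases claimed for $W_\ell$.

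It remains to check that the notation matches. In this paper $a=r^rs^s(\ell^N-t)^t\delta^{r+s}$ is written as $\epsilon\ell^b(1+c)$, which is precisely the decomposition $\epsilon'_N\ell^{b'_N}(1+c'_N)$ in the theorem. Hence $b=b'_N$, $c=c'_N$, and $w$ agrees with $w(a)$. The sign $\pm$ and the factor $e$ are defined identically in both statements.

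The real content lies in \Cref{relative} and \Cref{value of hilbert symbol}, which I take as given; the theorem is only their combination. The one point I would double-check is the normalisation. I want to confirm that the product decomposition of $W(\phi_{\delta,V})$ uses the same additive character and uniformizer $\pi_F=\pi^2$ as in \cite{yanagihara2025rootnumberstwistedfermat}, so that no extra sign such as $\leg{-1}{\ell}$ appears. I also want to confirm that the branch "otherwise" (conductor exponent $f'=1$) is consistent with $W_\ell=\leg{2}{\ell}i^{\frac{\ell^{N-1}(\ell-1)}{2}}$. I would settle both by following verbatim the corresponding step in the proof of \cite[Theorem 1.2]{yanagihara2025rootnumberstwistedfermat}.
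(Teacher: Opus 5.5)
Your proposal is correct and follows the paper's own argument: the theorem is proved by combining \cite[Proposition 4.4]{yanagihara2025rootnumberstwistedfermat} for $p\neq\ell$ with $W_\ell(\phi_\delta^{(N)})=W(\phi_{\delta,\pi},\eta_{\pi_F})\,W(\eta_{\pi_F})$, where $W(\eta_{\pi_F})=i^{\frac{\ell^{N-1}(\ell-1)}{2}}$ and the relative root number is taken from \Cref{relative} with $b=b'_N$, $c=c'_N$. One caveat comes from \Cref{relative} itself rather than from your combination step: if $\ord_\ell(N-1)=1$ then $\ord_\ell(b'_N)=N$, and the condition ``$w=N$, $\ord_\ell(b'_N+c'_N)=w$'' no longer coincides with Sharifi's case $w=N=\ord_\ell(c'_N)$ (for instance, when $\ord_\ell(c'_N)>N$ the number $r^rs^s(\ell^N-t)^t\delta^{r+s}$ is an $\ell^N$-th power in $\Q_\ell$, so $f'=1$ and the ``otherwise'' value applies), so in that edge case the first case should be stated with $w=N=\ord_\ell(c'_N)$.
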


\begin{exampleeigo}
    Consider the case when $N = 2,\ \ell=3$. Essential cases of $(r,s,t)$ are $(r,s,t)=(3,5,1),(3,4,2)$ and $(6,2,1)$ up to equivalence over $\overline{\Q}$ given in \cite[(1),(2)]{coleman1989torsion}.

    Tables of the global root number $W(\phi_{\delta}^{(N)})$ in this case and $1\leq \delta\leq 8$ is as follows:

        \begin{table}[h]
    \centering
		\begin{tabular}{|c|c|c|c|c|c|c|c|c|c|c|c|c|c|c|c|c|c|c|c|}
			\hline
			$\delta$ &1 &2 &4 &5 &7 &8 
            \\ \hline
                $\ord_{\ell}(b_N')$  &1 &1 &1 &1 &1 &1
                \\ \hline
			$\ord_{\ell}(c_N')$  &1 &3 &1 &1 &2 &1 
                \\ \hline
            $\ord_{\ell}(b_N'+c_N')$  &3 &1 &1 &1 &1 &2 
                \\ \hline
			$W(\phi_{\delta}^{(N)})$ & $-1$ & $1$ &$-1$ &$-1$ &$-1$ &$1$
            \\ \hline
		\end{tabular}
            \caption{The case when $(r,s,t)=(3,5,1)$.}
	\end{table}
    
    \begin{table}[h]
    \centering
		\begin{tabular}{|c|c|c|c|c|c|c|c|c|c|c|c|c|c|c|c|c|c|c|c|}
			\hline
			$\delta$ &1 &2 &4 &5 &7 &8 
            \\ \hline
                $\ord_{\ell}(b_N')$  &1 &1 &1 &1 &1 &1
                \\ \hline
			$\ord_{\ell}(c_N')$  &1 &1 &5 &2 &1 &1 
                \\ \hline
            $\ord_{\ell}(b_N'+c_N')$  &2 &1 &1 &1 &1 &2 
                \\ \hline
			$W(\phi_{\delta}^{(N)})$ & $-1$ & $-1$ &$1$ &$1$ &$1$ &$1$
            \\ \hline
		\end{tabular}
            \caption{The case when $(r,s,t)=(3,4,2)$.}
	\end{table}

    \begin{table}[h]
    \centering
		\begin{tabular}{|c|c|c|c|c|c|c|c|c|c|c|c|c|c|c|c|c|c|c|c|}
			\hline
			$\delta$ &1 &2 &4 &5 &7 &8 
            \\ \hline
                $\ord_{\ell}(b_N')$  &1 &1 &1 &1 &1 &1
                \\ \hline
			$\ord_{\ell}(c_N')$  &1 &1 &4 &2 &1 &1 
                \\ \hline
            $\ord_{\ell}(b_N'+c_N')$  &2 &1 &1 &1 &1 &2 
                \\ \hline
			$W(\phi_{\delta}^{(N)})$ & $1$ & $1$ &$1$ &$-1$ &$-1$ &$-1$
            \\ \hline
		\end{tabular}
            \caption{The case when $(r,s,t)=(6,2,1)$.}
	\end{table}

\end{exampleeigo}

\section{Appendix}\label{Appendix1} In this section, we explain that when 
    $(r,s,t)=(\ell^{N-1}(\ell-1),1,\ell^{N-1}-1)$, the new part
$\jac(C_{\ell^N})^{\mathrm{new}}$ 
of the Jacobian variety $\jac(C_{\ell^N})$ coincides with 
$\jac(C_{\ell^N})$ itself.  
We also check when $N=2$ in addition, that $C_{\ell^2}$ is isomorphic over $\Q$ to 
the non-singular projective plane 
curve $D_{\delta}:\delta^{s+2t}U W^\ell+U^{\ell+1}=\delta V^{\ell}W$ with the affine model 
\[
D^\circ_\delta:\quad \delta^{s+2t}u+u^{\ell+1}=\delta v^{\ell},\ 
(u,v)=\Big(\frac{U}{W},\frac{V}{W}\Big).
\]

As for the first claim, we observe  
$$y^{\ell^N}=x^{r}(\delta-x)^s=x^{\ell^N-s-t}(\delta-x)^s.$$
Multiplying both sides by $x^{-\ell^N}$, the equation becomes 
$$\Big(\frac{y}{x}\Big)^{\ell^N}=\Big(\frac{1}{x}\Big)^t \Big(\frac{\delta-x}{x}\Big)^s.$$
Substituting $(X,Y)=\Big(-\displaystyle\frac{\delta-x}{x},\frac{y}{x}\Big)$ with 
$\displaystyle\frac{1}{x}=\delta^{-1}(1-X)$, we have 
$$Y^{\ell^N}=(-1)^s\delta^{-t} X^s(1-X)^t.$$
Substituting $(X,Y)\mapsto ( \delta^{-1} X,(-1)^s Y)$, eventually, we see that  
$$C'^\circ_{\ell^N}:Y^{\ell^N}=\delta^{-s-2t} X^s(\delta-X)^t.$$
Let $C'_{\ell^N}$ be the non-singular projective model over $\Q$ of $C'^\circ_{\ell^N}$. Then, we have ${\rm Jac}(C_{\ell^N})\simeq {\rm Jac}(C'_{\ell^N})$ over $\Q$. Further, it is easy to see that 
$${\rm Jac}(C_{\ell^N})^{{\rm new}}\stackrel{\Q}{\simeq} {\rm Jac}(C'_{\ell^N})^{{\rm new}}$$
where ${\rm Jac}(C'_{\ell^N})^{{\rm new}}$ is the quotient of 
${\rm Jac}(C'_{\ell^N})$ by the Jacobian of the non-singular projective model of 
$$C'^\circ_{\ell^{N-1}}:Y^{\ell^{N-1}}=\delta^{-s-2t} X^s(\delta-X)^t.$$
Now, we specialize $(s,t)=(1,\ell^{N-1}-1)$. Then, the defining equation of $C'^\circ_{\ell^{N-1}}$ satisfies
$$\Big(\frac{Y}{\delta-X}\Big)^{\ell^{N-1}}=\delta^{-s-2t} X.$$
This means that $C'^\circ_{\ell^{N-1}}$ is a rational curve and thus, 
the Jacobian variety of its non-singular projective model is trivial. 
Therefore, we have the first claim. 

For the second claim, when $N=2$, substituting $(X,Y)=\Big(\displaystyle\frac{\delta^{s+2t}u-u^{\ell+1}+\delta v^\ell }{2v^\ell},\frac{u}{v}\Big)$, the affine curve $C'^\circ_{\ell^{2}}$ is birational over $\Q$ to 
$D^\circ_\delta$,  
since 
$$\delta^{-s-2t} X(\delta-X)^{\ell-1}=\delta^{-s-2t}\Big(\frac{\delta^{s+2t} u-u^{\ell+1}+\delta v^\ell }{2v^\ell}\Big)
\Big(\frac{-\delta^{s+2t}u+u^{\ell+1}-\delta v^\ell }{2v^\ell}\Big)^{\ell-1}$$
$$=\frac{u}{v^\ell} 
\Big(-\frac{u^{\ell+1}}{v^\ell}\Big)^{\ell-1}=\Big(\frac{u}{v}\Big)^{\ell^2}=Y^{\ell^2}$$
and the unique non-singular smooth models of the both curves have the same genus $\displaystyle\frac{\ell^2-\ell}{2}$. Thus, we have the second claim.

\bibliographystyle{abbrv}
\bibliography{test}
\end{document}